\theoremstyle{plain}
\newtheorem{theorem}{Theorem}[section]
\newtheorem{lemma}[theorem]{Lemma}
\newtheorem{corollary}[theorem]{\bf Corollary}
\newtheorem{remark}[theorem]{\bf Remark}
\newtheorem{proposition}[theorem]{\bf Proposition}
\newcommand \hgt{ \mbox{ht}}
\newcommand \Aut{ \mbox{Aut}}
\newcommand \Sym{ \mbox{Sym}}
\newcommand \Id{ \mbox{Id}}
\newcommand \dom{{\rm{ dom}}}
\newcommand \rng{{\rm{ rng}}}
\newcommand \age{{\rm{ Age}}}
\newcommand \Th{{\rm{ Th}}}
\newcommand \Age{{\rm{ Age}}}
\newcommand \FMp{\KK_G}
\newcommand \BP{ \mbox{BP}}
\newcommand \zdef{{\rm{ def}}}
\newcommand \dist{{\rm{ dist}}}
\newcommand \fra{Fra\"{i}ss\'{e} }
\newcommand \uhr{\upharpoonright}
\newcommand {\cc}[1]{\mathcal{ #1 }}
\newcommand \FF{\mathcal{F}}
\newcommand \KK{\mathcal{K}}
\newcommand \AAA{\mathcal{A}}
\newcommand \BB{\mathcal{B}}
\newcommand \CC{\mathcal{C}}
\newcommand \OO{\mathcal{O}}
\newcommand \PP{\mathcal{P}}
\newcommand \NN{\mathbb{N}}
\newcommand \QQ{\mathbb{Q}}
\newcommand \Ury{\mathbb{U}}
\newcommand \ZZ{\mathbb{Z}}
\title[Remarks on weak amalgamation]{Remarks on weak amalgamation and large conjugacy classes in non-archimedean groups}
\author[M. Malicki]{Maciej Malicki}
\address{Institute of Mathematics, Polish Academy of Sciences}
\email{mamalicki@gmail.com}
\keywords{weak amalgamation, ample generics, homogenizable structures}
\subjclass[2010]{03E15, 54H11}
\thanks{Research was supported by National Agency for Academic Exchange, the Bekker Scholarship Programme PPN/BEK/2018/1/00331/U/00001.}
\begin{document}

\begin{abstract}
We study the notion of weak amalgamation in the context of diagonal conjugacy classes. Generalizing results of Kechris and Rosendal, we prove that for every countable structure $M$, Polish group $G$ of permutations of $M$, and $n \geq 1$, $G$ has a comeager $n$-diagonal conjugacy class iff the family of all $n$-tuples of $G$-extendable bijections between finitely generated substructures of $M$, has the joint embedding property and the weak amalgamation property.  We characterize limits of weak \fra classes that are not homogenizable. Finally, we investigate $1$- and $2$-diagonal conjugacy classes in groups of ball-preserving bijections of certain ordered ultrametric spaces. 
\end{abstract}

\maketitle

\section{Introduction}

Let us consider the following generalization of the notion of conjugacy class: for a group $G$, $n \geq 1$, and tuple $(g_1, \ldots, g_n) \in G^n$, the set
\[ \{ (g^{-1}g_1g, \ldots, g^{-1}g_ng) \in G^n: g \in G \} \]
is called an \emph{$n$-diagonal conjugacy class in $G$}. 
In topological groups, `large' (e.g., comeager) diagonal conjugacy classes often convey important information about the group's structure. For example, if $G$ is a Polish (i.e., separable and completely metrizable) topological group, and there exists a comeager $n$-diagonal conjugacy class in $G$ for every $n \geq 1$ (i.e., $G$ has \emph{ample generics}), the topology of $G$ is entirely determined by its algebraic structure (see \cite{KeRo}.) As a matter of fact, this is also true about various groups with a comeager $n$-diagonal conjugacy class only for $n=1$, e.g., the automorphism group $\Aut(\QQ)$ of the rational numbers.

In the context of Polish groups, most of the research on large diagonal conjugacy classes is focused on non-archimedean groups, i.e., automorphism groups of countable structures (in the model-theoretic sense.) It is known that then there is a connection between the existence of comeager diagonal conjugacy classes, and the notion of weak amalgamation. This was first established by Ivanov \cite{Iv} for $\omega$-categorical structures, and later Kechris and Rosendal \cite{KeRo} proved a general characterization to the effect that the automorphism group of the \fra limit of a \fra class $\KK$ of finite structures has a comeager $n$-diagonal conjugacy class if and only if the class $\KK_n$ of $n$-tuples of partial automorphisms of elements from $\KK$, has the joint embedding property JEP, and the weak amalgamation property WAP (see the next section for precise definitions.) They also characterized the existence of an $n$-diagonal dense conjugacy class in terms of JEP.

In fact, the Kechris--Rosendal characterization can be applied to every automorphism group $G$ of a countable structure $M$ because every such group can be realized as the automorphism group of a \fra limit $N$ that codes orbits of tuples in $M$. However, this new structure $N$ usually does not give any deeper insight into $G$ as compared with the original structure $M$, and so it is of limited help. %, and so, if $M$ is not a \fra limit to begin with, $N$ is of limited help.
In order to remove this difficulty, we generalize Kechris and Rosendal's results to all countable structures $M$, and all closed (equivalently: Polish) subgroups of $\Sym(M)$ (not necessarily automorphisms), using a variant of the Banach--Mazur game introduced by Krawczyk and Kubi\'{s} \cite{KrKu}. In Theorem \ref{th:ComeagerN}, we show that for every countable structure $M$, closed subgroup $G \leq \Sym(M)$ of permutations of $M$ (with the product topology), and $n \geq 1$, $G$ has a comeager $n$-diagonal conjugacy class if and only if the family $\KK_{G,n}$, consisting of all $n$-tuples of $G$-extendable bijections between finitely generated substructures of $M$, has JEP and WAP. Analogously (see Theorem \ref{th:DenseN}), $G$ has a dense $n$-diagonal conjugacy class if and only if $\KK_{G,n}$ has JEP.

Next, we study homogenizability of limits of weak \fra classes. Krawczyk and Kubi\'{s} \cite{KrKu} proved that hereditary classes satisfying JEP and WAP, i.e., \emph{weak \fra classes}, have a natural notion of limit that generalizes the notion of \fra limit. In light of the above discussion, it is natural to ask whether there actually exists a limit $M$ of a weak \fra class whose automorphism group cannot be viewed as the automorphism group of a \fra limit derived directly from $M$ in a finitary and constructive way. Following Covington \cite{Co} and Ahlman \cite{Ah}, we call a structure $M$ \emph{homogenizable} if there exists a finite, definable expansion $N$ of $M$ which is the limit of a \fra class (and so, in particular, $M$ and $N$ have the same automorphism group; see \cite{AtTo} for a weaker notion of homogenizability.) We show in Theorem \ref{th:WeakHomo} that a characterization of homogenizable structures proved by Ahlman \cite{Ah} turns out to be useful in this context, and we give an example of a non-homogenizable limit of a weak \fra class.

Finally, we study groups of ball-preserving bijections of ordered ultrametric spaces, objects that seem not to have been explicitly considered so far, although they have implicitly appeared in the literature devoted to structural Ramsey theory. For example, the Ramsey expansion of the class of boron trees studied by Jasinski \cite{Ja}, and Kwiatkowska and Malicki \cite{KwMa}, or Ramsey expansions of structures that can be naturally identified with Wa\.{z}ewski dendrites, studied by Kwiatkowska \cite{Kw}, can be naturally viewed as ordered ultrametric spaces with ball-preserving mappings as morphisms. In Theorems \ref{th:1-WAP} and \ref{th:2-noWAP}, we prove that groups of ball-preserving bijections of ordered ultrametric Urysohn spaces with rational distances have a comeager conjugacy class but they do not have a comeager $2$-diagonal conjugacy class. This, in particular, gives alternative, and much shorter proofs of Theorems 3.12 and 4.4 from \cite{KwMa}.
 
\section{Definitions}

A class $\KK$ of finitely generated structures in a given signature is called a \emph{\fra class} if it satisfies the following properties. It is countable up to isomorphism, it has the \emph{hereditary property} HP (for every $A\in \KK$, if $B$ is a substructure of  $A$, then $B\in \KK$),
the \emph{joint embedding property} JEP (for any $B_1,B_2\in \KK$ there exist $C\in \KK$, and embeddings $\psi_i:B_i \to C$),
and the amalgamation property AP (for any $A,B_1,B_2\in\KK$ and embeddings $\phi_i\colon A\to B_i$, $i=1,2$,
there exist $C\in\KK$ and embeddings $\psi_i\colon B_i\to C$, $i=1,2$, such that $\psi_1\circ \phi_1=\psi_2\circ \phi_2$). If, additionally, $\psi_1[B_1] \cap \psi_2[B_2]=\emptyset$ ($\psi_1[B_1] \cap \psi_2[B_2]=\psi_1 \circ \phi_1[A]$), we say that that $\KK$ has strong JEP (strong AP). And if there exists a cofinal subclass in $\KK$ with AP, we say that $\KK$ has the \emph{cofinal amalgamation property} CAP.

The class $\KK$ is called a \emph{weak \fra class} if, instead of AP, it satisfies the \emph{weak amalgamation property} WAP, i.e., for any $A \in \KK$ there is $A' \in \KK$, and an embedding $\tau:A \to A'$ such that for any $B_1,B_2\in\KK$ and embeddings $\phi_i\colon A' \to B_i$, $i=1,2$,
there exist $C\in\KK$ and embeddings $\psi_i\colon B_i\to C$, $i=1,2$, such that $\psi_1\circ \phi_1 \circ \tau =\psi_2\circ \phi_2 \circ \tau$. Any such $A'$ is called \emph{$A$-good}.

A countable structure $M$ is {\em ultrahomogeneous} if every automorphism between finitely generated substructures of $M$ can be extended to an automorphism of the whole $M$. In the case that $M$ is ultrahomogeneous, $\age(M)$, i.e., the class  of all finitely generated substructures embeddable in $M$, is a \fra class. And, by a classical theorem due to Fra\"{i}ss\'{e}, for every \fra class  $\KK$ of finitely generated structures, there is a unique up to isomorphism countable ultrahomogeneous structure $M$, called the limit of $\KK$, such that  $\KK=\age(M)$ (see \cite[Section 7.1]{Ho}.) Analogously, if $\KK$ is a weak \fra class, by results of Krawczyk and Kubi\'{s} \cite{KrKu}, there is a unique up to isomorphism countable structure $M$ satisfying a weak form of ultrahomogeneity, and such that $\KK=\age(M)$ (see \cite[Theorem 5.1]{KrKu}.) We also call this $M$ the limit of $\KK$. 

For a mapping $f$, we define $\zdef(f)=\dom(f) \cup \rng(f)$. By an orbit of $f$, we mean a maximal set $O=\{o_0, \ldots, o_n\}$ such that $f(o_i)=o_{i+1}$, $i<n$. Let $M$ be a countable structure, and let $G \leq \Sym(M)$ be a group of permutations of $M$, with the product topology. A mapping $S:A \to B$, where $A$ and $B$ are substructures of $M$, is called \emph{$G$-extendable} if it can be extended to an element of $G$. %In particular, an $\Aut(M)$-extendable mapping is simply a partial automorphism of $M$.
For a fixed $n \geq 1$, by $\KK_{G,n}$ (or by $\KK_G$, for $n=1$) we denote the family of all $n$-tuples of partial $G$-extendable mappings between finitely generated substructures of $M$. Clearly, the properties JEP, AP and WAP can be also defined in a natural way for families $\KK_{G,n}$, provided that an appropriate notion of embedding is specified. Let $\bar{S}=(S_1, \ldots, S_n)$, $\bar{T}=(T_1, \ldots, T_n)$ be tuples of $G$-extendable mappings between elements of $\KK_{G,n}$. An embedding of $\bar{S}$ into $\bar{T}$ is a $G$-extendable injection $\phi:A \to B$, where $A,B$ are substructures of $M$, such that $\zdef(S_i) \subseteq A$, $\zdef(T_i) \subseteq B$, and $\phi \circ S_i \subseteq T_i \circ \phi$, $i \leq n$; $\phi$ is an isomorphism if $\phi \circ S_i = T_i \circ \phi$, $i \leq n$. We write $\bar{S} \leq \bar{T}$ if the identity embeds $\bar{S}$ into $\bar{T}$.   

For a class $\KK_{G,n}$, by $\sigma \KK_{G,n}$, we denote the family of all chains of elements of $\KK_{G,n}$, i.e., objects of the form $\bigcup S_k$, where $S_k \in \KK_{G,n}$, and $S_k \leq S_{k+1}$, $n \in \NN$. We can define embeddings and isomorphisms between elements of $\sigma \KK_{G,n}$ as above.

\section{Weak \fra limits and ample generics}

To make the notation more transparent, in this section we usually denote elements of a class of finitely generated structures $\KK$ by letters $A,B,C, \ldots$, elements of $\KK_{G,n}$ by letters $S,T,U, \ldots$, embeddings of elements from $\KK_{G,n}$ by $\phi$, $\psi, \ldots$, and elements of $\sigma \KK_{G,n}$ by $\Phi, \Psi, \ldots$.

The following observations are straightforward.

\begin{remark}
\label{re:Emb}
Let $M$ be a countable structure, let $G \leq \Sym(M)$, and let $\Phi, \Psi \in \sigma \KK_G$.
\begin{enumerate}
\item If $\Xi$ is an embedding of $\Phi$ into $\Psi$, then $\Xi^{-1}$  is an embedding of $\Psi \upharpoonright \rng(\Phi)$ into $\Phi$. 
\item $\Phi$ and $\Psi$ are isomorphic if and only if they are conjugate by an element of $G$.
\item If $M$ is the limit of a \fra class $\KK$ of finite structures, and $G=\Aut(M)$, then $\KK_G$ is essentially the same object as $\KK_1$ in \cite{KeRo}.
\end{enumerate}
\end{remark}
 
 We say that $\Phi \in \sigma \KK_G$ is \emph{$\FMp$-universal} if every element of $\KK_G$ can be embedded into $\Phi$. And $\Phi$ is \emph{weakly $\KK_G$-injective} if it is $\FMp$-universal, and any of the conditions of the following proposition holds.
 
\begin{proposition}
\label{pr:WeakInj}
Let $M$ be a countable structure, let $G \leq \Sym(M)$, and let $\Phi \in \sigma \KK_G$. % such that $\phi$ is an automorphism of $M$.
The following conditions are equivalent:
\begin{enumerate}
\item[(a)] For every $S \leq \Phi$, $S \in \KK_G$, there exists $T \in \KK_G$ such that $S \leq T \leq \Phi$ and for every
$U \in \KK_G$ with $T \leq U$ there exists an embedding $\phi:U \rightarrow \Phi$ satisfying $\phi \uhr \dom(S) = \Id_{\dom(S)}$,
\item[(b)] for every $S \leq \Phi$, $S \in \KK_G$, there exists an isomorphism $\phi: S' \rightarrow S$, where $S' \in \FMp$, and $T \in \KK_G$ with $S' \leq T$, such that for every $U \in \KK_G$ with $T \leq U$ there exists an embedding $\psi : U \rightarrow \Phi$ extending $\phi$,
\item[(c)] for every $S \leq \Phi$, $S \in \KK_G$, and every isomorphism $\phi: S' \rightarrow S$, $S' \in \KK_G$, there exists $T \in \KK_G$ with $S' \leq T$, and such that for every $U \in \KK_G$ with $T \leq U$ there exists an embedding $\psi : U \rightarrow \Phi$ extending $\phi$.
\end{enumerate}
\end{proposition}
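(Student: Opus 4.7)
The plan is to prove the cyclic chain $(c)\Rightarrow(b)\Rightarrow(a)\Rightarrow(c)$. The implication $(c)\Rightarrow(b)$ is immediate: for $S\leq\Phi$ in $\KK_G$, apply $(c)$ to the identity isomorphism $\Id_S$ and take $S'=S$, $\phi=\Id_S$ as the witnesses demanded by $(b)$. Both of the nontrivial implications follow the same template. Suppose we are given a $G$-extendable isomorphism $\alpha$ between two finitely generated substructures of $M$; extend $\alpha$ to an element $h\in G$ and transport $n$-tuples of $G$-extendable mappings from one copy to the other by conjugation, $U\mapsto (h\,U_i\,h^{-1})_{i\le n}=:h\cdot U$. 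This operation sends $\KK_G$ to $\KK_G$, is monotone with respect to $\leq$, and, crucially, agrees with $\alpha$ on the copy where $\alpha$ is defined. This is exactly where the hypothesis that elements of $\KK_G$ consist of $G$-extendable mappings is used.

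For $(b)\Rightarrow(a)$, start with $S\leq\Phi$, use $(b)$ to produce an isomorphism $\phi_0\colon S'\to S$ and $T_0\geq S'$, and apply the extension property of $(b)$ with $U=T_0$ to get an embedding $\psi_0\colon T_0\to\Phi$ extending $\phi_0$. Setting $T:=\psi_0(T_0)$ yields an element of $\KK_G$ with $S\leq T\leq\Phi$, since $\psi_0$ extends $\phi_0$ and $\phi_0(S')=S$. To verify the extension property of $T$ required by $(a)$, let $U\geq T$ be given; extend $\psi_0^{-1}\colon T\to T_0$ to some $h\in G$, observe $h\cdot U\geq h\cdot T=T_0$, apply $(b)$ to obtain an embedding $\psi\colon h\cdot U\to\Phi$ extending $\phi_0$, and set $\chi:=\psi\circ(h\uhr\zdef(U))$. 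Then $\chi$ is an embedding $U\to\Phi$ (composition of an isomorphism and an embedding) with $\chi\uhr\dom(S)=\Id_{\dom(S)}$, because $h\uhr\zdef(S)=\phi_0^{-1}$ and $\psi\uhr\zdef(S')=\phi_0$.

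The implication $(a)\Rightarrow(c)$ is entirely parallel. Given $S\leq\Phi$ and an isomorphism $\phi\colon S'\to S$, apply $(a)$ to obtain $T_0\in\KK_G$ with $S\leq T_0\leq\Phi$, extend $\phi^{-1}$ to $h\in G$, and set $T:=h\cdot T_0\geq S'$. For $U\geq T$, $(a)$ applied to $h^{-1}\cdot U\geq T_0$ yields an embedding $\psi_0\colon h^{-1}\cdot U\to\Phi$ that is the identity on $\dom(S)$; setting $\psi:=\psi_0\circ(h^{-1}\uhr\zdef(U))$, the same bookkeeping gives $\psi\uhr\zdef(S')=\phi$.

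The main obstacle I anticipate is not conceptual but purely notational: one has to keep careful track of which copy each $n$-tuple lives on, and verify that the conjugation $U\mapsto h\cdot U$ respects both the order $\leq$ in $\KK_G$ and the required identifications on $\dom(S)$. One simplifying observation, useful throughout, is that any embedding $\phi\colon U\to\Phi$ with $S\leq U$, $S\leq\Phi$, and $\phi\uhr\dom(S)=\Id_{\dom(S)}$ is automatically the identity on $\rng(S)$ as well: from $\phi\circ U_i\subseteq\Phi_i\circ\phi$ and $S_i\subseteq U_i\cap\Phi_i$, one gets $\phi(S_i(x))=\Phi_i(x)=S_i(x)$ for $x\in\dom(S_i)$. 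This makes the condition ``$\phi\uhr\dom(S)=\Id_{\dom(S)}$'' of $(a)$ and the condition ``$\phi$ extends $\Id_S$'' implicitly arising in $(b)$ and $(c)$ genuinely equivalent, which is what allows the three formulations to be translated into one another.
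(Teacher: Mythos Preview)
Your proof is correct and rests on the same device as the paper's: extend the given isomorphism to an element of $G$ and transport witnesses by conjugation. The only real difference is the direction of the cycle. The paper proves $(a)\Rightarrow(b)\Rightarrow(c)\Rightarrow(a)$: the outer implications $(a)\Rightarrow(b)$ and $(c)\Rightarrow(a)$ are disposed of by taking $\phi$ to be the identity, and the conjugation argument is invoked once, in $(b)\Rightarrow(c)$. Your cycle $(c)\Rightarrow(b)\Rightarrow(a)\Rightarrow(c)$ places the trivial step at $(c)\Rightarrow(b)$ and invokes conjugation twice---once in $(b)\Rightarrow(a)$, to pull the abstract witness $T_0$ into $\Phi$ above $S$, and once in $(a)\Rightarrow(c)$, to push a witness sitting inside $\Phi$ out to an arbitrary copy $S'$. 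This is marginally less economical but entirely sound, and your explicit verification that an embedding fixing $\dom(S)$ automatically fixes $\rng(S)$ makes precise a point the paper leaves implicit.
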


\begin{proof}
In order to prove (a)$\Rightarrow$(b), simply put $\phi=\Id_{\dom(S)}$.
Proving (b)$\Rightarrow$(c) is a typical diagram chasing. Fix $S \in \KK_G$, and suppose that $\phi_1:S' \rightarrow S$ and $T_1$ witness that (b) holds for $S$. Let $\Phi_1$ be an element of $G$ extending $\phi_1$. Let $\phi_2: S'' \rightarrow S$ be an isomorphism, and let $\Phi_2$ be an element of $G$ extending $\phi_2$. Then %, for $\Psi_2=\Phi_2^{-1} \circ \Phi_1 \uhr \dom(T_1)$,
$\phi_2$ and $T_2=\Xi \circ T_1 \circ \Xi^{-1}$, where $\Xi=\Phi_2^{-1} \circ \Phi_1$, also witness that (b) holds. Indeed, suppose that $U_2 \geq T_2$, $U_2 \in \KK_G$. Then $U_1=\Xi^{-1} \circ U_2 \circ \Xi$ is an element of $\KK_G$, and $T_1 \leq U_1$ because $\Xi \circ T_1 \circ \Xi^{-1}=T_2 \leq U_2$. By our assumption, there is an embedding $\psi$ of $U_1$ into $\Phi$ extending $\phi_1$. But then $\psi \circ \Xi^{-1}$ is an embedding of $U_2$ into $\Phi$ that extends $\phi_2$. The last statement holds because $\psi \circ \Xi^{-1}=\psi \circ \Phi_1^{-1} \circ \Phi_2$, both $\psi$ and $\Phi_1$ extend $\phi_1$, while $\Phi_2$ extends $\phi_2$.

To prove (c)$\Rightarrow$(a), take $\phi=\Id_{\dom(S)}$, and use (c) to find $T$ and $\psi$. Then $\psi \uhr \dom(T)$ is as required.
\end{proof}

\begin{theorem}
\label{th:WeakLim}
Let $M$ be a countable structure, and let $G \leq \Sym(M)$ be a closed subgroup such that $\KK_G$ has JEP and WAP. Then there exists a weakly $\FMp$-injective $\Phi \in G$.
\end{theorem}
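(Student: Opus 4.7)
The strategy is a Baire category argument inside the Polish group $G$. The plan is to exhibit a comeager set of $g \in G$ which, viewed via the chain of its restrictions to finitely generated substructures of $M$, is both $\KK_G$-universal and satisfies condition (a) of Proposition~\ref{pr:WeakInj}. Any such $g$ will serve as the desired $\Phi$.

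First, I would establish $\KK_G$-universality. For each $S \in \KK_G$, let $A_S = \{g \in G : S \text{ embeds into } g\}$. Openness is immediate from the product topology on $\Sym(M)$. For density, given a nonempty basic open $V \subseteq G$, I would extract a compatible $V_0 \in \KK_G$ encoding the constraint defining $V$, apply JEP to $V_0$ and $S$ to obtain a joint extension $W \in \KK_G$, and then use $G$-extendability of $W$ to find $g \in V \cap A_S$. Since $M$ is countable, $\KK_G$ has only countably many elements, so $\bigcap_{S \in \KK_G} A_S$ is comeager.

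Second, I would arrange condition (a). For each $S \in \KK_G$ fix, by WAP, an $S$-good extension $T_S \in \KK_G$ with witnessing embedding $\tau_S \colon S \to T_S$. For each $U \in \KK_G$ with $T_S \leq U$, consider
\[ D_{S,U} = \bigl\{ g \in G : T_S \not\leq g \text{ or some embedding } \phi \colon U \to g \text{ satisfies } \phi \uhr \dom(S) = \Id_{\dom(S)} \bigr\}. \]
Openness is routine. For density, given a basic open $V \subseteq G$: extract $V_0 \in \KK_G$ compatible with $V$, combine $V_0$ with $T_S$ via JEP into $V_1 \in \KK_G$ with $T_S \leq V_1$, and then apply WAP (using $T_S$-goodness at $S$) to amalgamate $V_1$ and $U$ over $\dom(S)$, producing $W \in \KK_G$ together with embeddings $\psi_1 \colon V_1 \to W$ and $\psi_2 \colon U \to W$ agreeing on $\dom(S)$. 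By Remark~\ref{re:Emb}(2) and $G$-extendability, $W$ can be realized inside $M$ as a restriction of some $g \in G$ lying in $V$, and then $\psi_2$ witnesses membership in $D_{S,U}$.

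Intersecting the countably many dense open sets $A_S$ and $D_{S,U}$ yields a comeager set in $G$, every member of which is $\KK_G$-universal and satisfies condition (a) of Proposition~\ref{pr:WeakInj}. The main obstacle is the density step for $D_{S,U}$: the abstract amalgam $W$ produced by WAP lives only up to isomorphism, so one must carefully place it inside $M$ in a way compatible with both the basic open $V$ (ensuring $g \in V$) and the identity on $\dom(S)$ (ensuring the embedding $\psi_2$ extends $\Id_{\dom(S)}$). This alignment is accomplished by invoking the conjugacy of isomorphic elements of $\sigma\KK_G$ by members of $G$ via Remark~\ref{re:Emb}(2), and then upgrading from $\KK_G$ to a full element of $G$ through $G$-extendability.
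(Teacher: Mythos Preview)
Your Baire-category argument in $G$ is essentially the paper's Rasiowa--Sikorski construction transposed from the poset $\KK_G$ to the group; the paper's cofinal sets $E_S$ and $D_{S,T,U}$ correspond to your dense open sets $A_S$ and $D_{S,U}$.

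There is, however, a genuine gap in your verification of condition~(a). You fix one $S$-good extension $T_S$ per $S$, and $D_{S,U}$ imposes a constraint on $g$ only when $T_S \leq g$. But nothing in your family of dense open sets forces the implication $S \leq g \Rightarrow T_S \leq g$: the set $\{g \in G : S \leq g \text{ and } T_S \not\leq g\}$ is open and in general nonempty, hence met by comeager many $g$. For such $g$ all of your $D_{S,U}$ are vacuously satisfied, yet condition~(a) for that $S$ is left unverified; nor can you fall back on some larger restriction $S' \leq g$, since there is no reason $T_{S'} \leq g$ for \emph{any} finite $S'$ with $S \leq S' \leq g$. The paper avoids this in two ways: it lets $T$ range over \emph{all} $S$-good extensions rather than one fixed $T_S$, and it builds $\Phi$ as an explicit increasing chain $p_0 \leq p_1 \leq \cdots$, which allows one to steer the construction so that for each $S \leq p_n$ the chain eventually passes through an $S$-good $T$ above $S$ (any $p_n$-good extension of $p_n$ is automatically $S$-good, since goodness is inherited downward). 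A smaller wrinkle: in your density step for $D_{S,U}$, JEP yields only a joint \emph{embedding} of $V_0$ and $T_S$, not a common literal extension $V_1$ with $V_0 \leq V_1$ and $T_S \leq V_1$ simultaneously; this is easily repaired by a case split on whether $V_0$ and $T_S$ are literally compatible in $\KK_G$.
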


\begin{proof}
As in \cite[Theorem 5.1]{KrKu}, we use the Rasiowa--Sikorski lemma, which says that for every countable partial ordering $P$, and every countable family $\cc{D}$ of cofinal subsets of $P$, there exists an increasing sequence $p_0, p_1, \ldots$ of elements of $P$ such that for every $D \in \cc{D}$ there is $n$ such that $p_n \in D$. In the present context, $P=\KK_G$ with the ordering given by inclusion. For any $m \in M$, and $S,T,U \in \KK_G$, where $T$ is $S$-good, and $T \leq U$, consider the following subsets of $\KK_G$:
\[ F_m=\{ V \in \KK_G: m \in \dom(V) \cap \rng(V) \}. \]
\[ E_{S}=\{ V \in \KK_G: S \mbox{ embeds in } V \}, \]
\[ D_{S,T,U}=\{ V \in \KK_G: \mbox{ if } T \leq V \mbox{ then } (\exists \mbox{ an embedding } \phi:U \rightarrow V) \, \phi \uhr S=\Id_{\dom(S)}  \}. \]

The sets $F_m$ are cofinal because mappings $V$ in the definition are $G$-extendable, the sets $E_S$ are cofinal by JEP, and the sets $D_{S,T,U}$ are cofinal by WAP, and because $G$-extendability of embeddings in $\KK_G$ warranties that weak amalgams over $S$ can be always chosen so that one of the embeddings of $S$ is the identity. Let $\Phi=\bigcup p_n$ be given by the Rasiowa--Sikorski lemma. Then the sets $F_m$ witness that $\Phi$ is a bijection from $M$ to $M$, and so, because $G$ is closed in $\Sym(M)$, $\Phi \in G$. The sets $E_{S}$ witness that $\Phi$ is $\FMp$-universal, and the sets $D_{S,T,U}$ witness that Proposition \ref{pr:WeakInj} (a) holds for $\Phi$, i.e., $\Phi$ is weakly $\FMp$-injective.% Finally, the sets $F_m$ witness that $\phi$ is a bijection, and so an automorphism of $M$.
\end{proof}

Now we consider the game $BM_p(G,\Phi)$ defined in \cite{KrKu}. Fix $\Phi \in \sigma \KK_G$. Both players play elements of $\KK_G$. Eve starts with some $S_0 \in \KK_G$, then Odd chooses $S_1 \in \KK_G$ such that $S_0 \leq S_1$. The players continue in this fashion, constructing a sequence $S_0 \leq S_1 \leq S_2 \leq \ldots$ of elements of $\KK_G$ whose union $\Psi=\bigcup_n S_n$ is an element of $\sigma \KK_G$. Odd wins if $\Psi$ is isomorphic to $\Phi$. 

%If $(M,\phi)$ is weakly $\KK_M$-injective, the game $BM_p(M,\phi)$ will be denoted by $BM_p(M)$. Later, we will see that $(M,\phi)$ is unique up to isomorphism, so this notation is unambigious.

\begin{theorem}
\label{th:NotInjLoses}
Let $M$ be a countable structure, let $G \leq \Sym(M)$ be a closed subgroup, and suppose that $\Phi \in \sigma \KK_G$ is not weakly $\FMp$-injective. Then Eve has a winning strategy in $BM_p(G,\Phi)$.
\end{theorem}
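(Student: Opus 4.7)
The plan is to split into two cases according to how $\Phi$ fails to be weakly $\FMp$-injective, and in each case exhibit an explicit winning strategy for Eve.

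Suppose first that $\Phi$ is not $\FMp$-universal, and pick $S^{\dagger} \in \KK_G$ that does not embed into $\Phi$. Eve simply opens with $S_0 = S^{\dagger}$ and plays arbitrarily afterwards. For any resulting $\Psi$ we have $S^{\dagger} \leq \Psi$, and if $\Psi \cong \Phi$ were witnessed by some $g \in G$ (Remark \ref{re:Emb}(2)), then $g \uhr \zdef(S^{\dagger})$ would embed $S^{\dagger}$ into $\Phi$, contradicting the choice of $S^{\dagger}$.

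Otherwise $\Phi$ is $\FMp$-universal but condition (b) of Proposition \ref{pr:WeakInj} fails. The conjugation argument in the proof of (b)$\Rightarrow$(c) \emph{upgrades} the failure from one particular $\phi$ to all of them: there exists $S^* \in \KK_G$ with $S^* \leq \Phi$ such that for every isomorphism $\phi \colon S' \to S^*$ in $\KK_G$ and every $T \in \KK_G$ with $S' \leq T$, some $U \in \KK_G$ with $T \leq U$ admits no embedding into $\Phi$ extending $\phi$. Since $M$ is countable, the collection of all such pairs can be enumerated as $(S'_k, \phi_k)_{k \in \NN}$; fix a bookkeeping $j \mapsto k_j$ hitting every index infinitely often. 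Eve's strategy is the standard diagonalization: she plays $S_0$ arbitrarily, and at each stage $2j$ she inspects $k_j$; if $S'_{k_j} \leq S_{2j-1}$ she invokes the upgraded failure with $T = S_{2j-1}$ to choose $U \in \KK_G$ with $S_{2j-1} \leq U$ admitting no embedding into $\Phi$ extending $\phi_{k_j}$, and plays $S_{2j} = U$; otherwise she plays $S_{2j} = S_{2j-1}$.

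To verify that this wins, suppose toward contradiction that some play produces $\Psi \cong \Phi$ via $g \in G$, so $g \Psi g^{-1} = \Phi$. Set $S'' = g^{-1} S^* g \in \KK_G$ and $\phi = g \uhr \zdef(S'')$, an isomorphism $S'' \to S^*$; then $(S'', \phi) = (S'_k, \phi_k)$ for some $k$. Since $S'' \leq \Psi$ and $\zdef(S'')$ is finitely generated, $S'' \leq S_m$ for some $m$. Choose $j$ with $k_j = k$ and $2j - 1 \geq m$; then by construction Eve's move $S_{2j}$ admits no embedding into $\Phi$ extending $\phi_k$, yet $g \uhr \zdef(S_{2j}) \colon S_{2j} \to \Phi$ is an embedding extending $\phi_k$, a contradiction. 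The main obstacle to anticipate is the upgrade step: the literal negation of (c) only produces one bad pair $(S^*, \phi^*)$, whereas Eve has to block an arbitrary $g \in G$, which forces her to consider \emph{every} isomorphism of some $S'' \in \KK_G$ onto $S^*$ — and it is precisely the equivalence of (a)–(c) that guarantees all of them are bad for a single $S^*$.
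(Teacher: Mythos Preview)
Your argument is correct and follows the same diagonalization as the paper --- fix a witness $S^*$ to the failure of Proposition~\ref{pr:WeakInj}(b) and have Eve book-keep over all isomorphisms onto $S^*$, at each even stage killing one of them; your explicit case split for non-universal $\Phi$ is in fact more careful than the paper's version, which passes straight to the negation of (b). One cosmetic note: the ``upgrade'' step you flag is unnecessary once you negate (b) rather than (c), since the negation of (b) already quantifies universally over the isomorphism $\phi$ --- your final paragraph shows you understand exactly this point, so the wording in the middle is just slightly misaligned.
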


\begin{proof}
If $\Phi$ is not $\FMp$-universal, then Eve just starts with any element of $\FMp$ that cannot be embedded into $\Phi$. Otherwise, we use Condition (b) from Proposition \ref{pr:WeakInj}, i.e., we fix $S \leq \Phi$, $S \in \KK_G$ such that for every isomorphism $\phi:S' \rightarrow S$, $S' \in \FMp$, and every $T \in \FMp$ with $S \leq T$, there is $U \in \FMp$ with $T \leq U$ such that no embedding $\psi : U \rightarrow M$ extends $\phi$.

Eve starts with $S_0=S$. Then, at every even step $n>0$, she applies the above condition to some fixed embedding $\phi:S' \rightarrow S$, where $S' \leq S_{n-1}$, and $T=S_{n-1}$, to obtain $S_n=U$ such that no embedding of $S_n$ into $M$ extends $\phi$. By an easy bookkeeping, Eve can proceed in such a manner that for every $n$ and every embedding of $S_n$ with range containing $S$ there is $n' \geq n$ such that no embedding of $S_{n'}$ into $\Phi$ extends $\phi$. Thus, $\bigcup_n S_n$ is not isomorphic to $\Phi$. 
\end{proof}

\begin{theorem}
\label{th:InjWins}
Let $M$ be a countable structure, let $G \leq \Sym(M)$ be a closed subgroup, and suppose that $\Phi  \in \sigma \KK_G$ is weakly $\FMp$-injective. Then Odd has a winning strategy in $BM_p(G,\Phi)$, and $\Phi \in G$.
\end{theorem}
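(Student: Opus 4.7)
The plan is to design Odd's strategy so that the play $S_0 \leq S_1 \leq \cdots$ produces $\Psi := \bigcup_k S_k$ which is both isomorphic to $\Phi$ in $\sigma \KK_G$ and satisfies $\zdef(\Psi) = M$. Granted this, $\Psi \in G$ because $G$ is closed in $\Sym(M)$, and $\Phi \in G$ follows since $\Phi$ and $\Psi$ are then conjugate in $G$ by Remark~\ref{re:Emb}(2). To produce the isomorphism, Odd builds, in parallel with the play, a nested chain of partial isomorphisms $h_0 \subseteq h_1 \subseteq \cdots$, where each $h_k$ is an isomorphism from some $T_k \leq S_k$ in $\KK_G$ onto $h_k(T_k) \leq \Phi$. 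The invariant Odd maintains after each of his moves is the \emph{extension property}: every $U \geq S_{2k+1}$ in $\KK_G$ admits an embedding into $\Phi$ extending $h_{2k+1}$.

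Fix enumerations $\{m_k\}$ of $M$, $\{a_k\}$ of $\dom(\Phi)$, and $\{b_k\}$ of $\rng(\Phi)$. By $\KK_G$-universality there is an initial embedding $h_0 : S_0 \to \Phi$, and we set $T_0 := S_0$. In general, before Odd's move $2k+1$ we have an embedding $h_{2k} : S_{2k} \to \Phi$ extending $h_{2k-1}$ (for $k=0$ this is the initial $h_0$; for $k \geq 1$ it is produced by applying the extension property from Odd's previous move to $U = S_{2k}$), and we set $T_{2k} := S_{2k}$. Odd then performs three sub-steps. \textbf{(Back)} Using the chain structure of $\Phi$, pick $R \leq \Phi$ in $\KK_G$ with $h_{2k}(T_{2k}) \leq R$, $a_k \in \dom(R)$, $b_k \in \rng(R)$, and pull back via any $g \in G$ extending $h_{2k}$ to obtain $T' := g^{-1}(R) \geq T_{2k}$ together with the iso $h' : T' \to R$ extending $h_{2k}$. \textbf{(Forth)} Enlarge $T'$ to $T''$ with $m_k \in \dom(T'') \cap \rng(T'')$ using the cofinality of the sets $F_{m_k}$ from the proof of Theorem~\ref{th:WeakLim}, and extend $h'$ correspondingly to an iso $h'' : T'' \to h''(T'') \leq \Phi$ via a common $G$-extension. \textbf{(Secure)} Apply Proposition~\ref{pr:WeakInj}(c) to $h''$ to produce $T^* \geq T''$ in $\KK_G$ such that every $U \geq T^*$ admits an embedding into $\Phi$ extending $h''$. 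Odd plays $S_{2k+1} := T^*$ and records $T_{2k+1} := T''$, $h_{2k+1} := h''$; the extension property now holds at stage $2k+1$.

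Passing to unions, $\bigcup_k T_k = \bigcup_k S_k = \Psi$ (since $T_{2k} = S_{2k}$); each $a_k$ lies in $\dom(h_{2k+1}(T_{2k+1}))$ and each $b_k$ in $\rng(h_{2k+1}(T_{2k+1}))$, so $\bigcup_k h_k(T_k) = \Phi$ as functions; and each $m_k$ lies in $\zdef(T_{2k+1})$, so $\zdef(\Psi) = M$. Thus $h := \bigcup_k h_k$ is a $G$-extendable bijection $\zdef(\Psi) \to \zdef(\Phi)$ intertwining $\Psi$ with $\Phi$, i.e., an isomorphism in $\sigma \KK_G$, so Odd wins and $\Phi \in G$ as explained. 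The main subtlety I expect to have to argue carefully is the ordering within (Secure): Proposition~\ref{pr:WeakInj}(c) ties the extension property to the specific iso $h''$ in force at the moment of invocation, so Odd must refrain from committing to values of $h$ on $\zdef(T^*) \setminus \zdef(T'')$; those values will be supplied by $h_{2k+2}$ after Eve's reply, and this deliberate non-commitment is precisely what lets the back-and-forth close.
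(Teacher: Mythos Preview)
Your overall strategy is sound and matches the paper's: build $\Psi = \bigcup S_k$ together with a nested chain of partial isomorphisms into $\Phi$, invoking Proposition~\ref{pr:WeakInj}(c) after each of Odd's moves to secure the ability to extend. However, there is a genuine gap in your (Forth) step. After (Back) you hold an isomorphism $h': T' \to R \leq \Phi$, but $h'$ has \emph{not} yet been protected by an application of Proposition~\ref{pr:WeakInj}(c); the only extension property currently in force is the one for $h_{2k-1}$. When you then enlarge $T'$ to $T''$ freely (via cofinality of $F_{m_k}$) and define $h''$ on $\zdef(T'') \setminus \zdef(T')$ by restricting some $g \in G$ extending $h'$, there is no reason for $g T'' g^{-1}$ to lie inside $\Phi$: the new values of $T''$ are chosen with no reference to $\Phi$, and conjugation by $g$ sends them to arbitrary points of $M$. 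So the assertion $h''(T'') \leq \Phi$ is unjustified, and the gains from (Back) cannot be assumed to survive.

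The fix is to swap the order of (Forth) and (Back). First enlarge $S_{2k}$ to some $T'$ with $m_k \in \dom(T') \cap \rng(T')$; since $T' \geq S_{2k-1}$, the extension property from the previous (Secure) step yields an embedding $h': T' \to \Phi$ extending $h_{2k-1}$. Only then perform (Back): choose $R \leq \Phi$ with $h'(T') \leq R$ and $a_k \in \dom(R)$, $b_k \in \rng(R)$, and pull back along a $G$-extension of $h'$ to obtain $T'' \geq T'$ and $h'' \supseteq h'$ with $h''(T'') = R \leq \Phi$. Then (Secure). In this order the only free domain-side enlargement is (Forth), and it is covered by the standing extension property, while (Back) is safe because it is a pullback from $\Phi$. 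This is exactly how the paper proceeds: it first enlarges to $S'_n$ with $m_n \in \dom(S'_n)$, applies the extension property to produce $\phi_n$, and only afterwards enlarges the carrier of $\phi_n$ so that $m_n \in \rng(\phi_n)$. Your closing remark correctly anticipates that ordering matters, but the delicate ordering is between (Back) and (Forth), not inside (Secure).
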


\begin{proof}
Let $\{m_{2k}\}$ be an enumeration of $M$. To begin with, let $S_0$ be the element chosen by Eve in the initial move. Since $\Phi$ is $\FMp$-universal, and $S_0$, as well as all embeddings into $\Phi$, are $G$-extendable, Odd can fix an embedding $\phi_0:S'_0\rightarrow \Phi$, with $S_0 \leq S'_0$, and with $m_0 \in \dom(S_0) \cap \rng(\phi_0)$.

Suppose now that, for some even $n$, elements $S_k$, $k \leq n$, have been selected so that, for every odd $k<n$, $S_k=T$, where $T$ is chosen by Odd using Proposition \ref{pr:WeakInj}(c) for $S=S_{i-1}$. Moreover, for every positive even $k<n$, Odd fixed an embedding $\phi_k:S'_k \rightarrow \Phi$, $S'_k \in \KK_G$, such that $S_k \leq S'_k$, $m_k \in \dom(S'_k) \cap \rng(\phi_{k})$, and $\phi_k$ extends $\phi_{k-2}$. Then Odd first fixes an embedding $\phi_{n}:S'_{n} \rightarrow \Phi$, $S'_n \in \KK_G$, such that $S_{n} \leq S'_{n}$, $m_n \in \dom(S'_n) \cap \rng(\phi_{n})$, and $\phi_{n}$ extends $\phi_{n-2}$; this is possible by the choice of $S_{n-1}$. Finally, Odd puts $S_{n+1}=T$, where $T$ is obtained by applying Proposition \ref{pr:WeakInj}(c) to $\phi=\phi_n$. In this way, regardless of what Eve does, the mapping $\Xi=\bigcup_n \phi_n$ is an embedding of $\Psi=\bigcup S_n$ into $\Phi$. Moreover, because $\dom(\Psi)=M$, and so $\Xi \circ \Psi[M]=M$, Remark \ref{re:Emb} implies that $\Xi^{-1}$ is also an embedding, and thus an isomorphism of $\Psi$ and $\Phi$. Clearly, $\Psi \in G$, and so $\Phi \in G$.
\end{proof}

\begin{theorem}
\label{th:InjJEPWAP}
Let $M$ be  a countable structure, let $G \leq \Sym(M)$ be a closed subgroup, and suppose that there exists a weakly $\FMp$-injective $\Phi \in \sigma \KK_G$. Then $\Phi$ is unique up to isomorphism, and $\FMp$ has JEP and WAP.
\end{theorem}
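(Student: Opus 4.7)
The plan is to prove the three assertions---uniqueness of $\Phi$, JEP of $\FMp$, and WAP of $\FMp$---in turn, using Theorem \ref{th:InjWins} and Proposition \ref{pr:WeakInj} as the main tools.

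For uniqueness, I will pit two instances of the game $BM_p$ against each other. Suppose $\Phi_1, \Phi_2 \in \sigma\FMp$ are both weakly $\FMp$-injective. By Theorem \ref{th:InjWins}, Odd has a winning strategy $\rho_k$ in $BM_p(G, \Phi_k)$ for $k=1,2$. Starting from some trivial $S_0 \in \FMp$, I will let $\rho_1$ produce Odd's moves $S_1, S_3, S_5, \ldots$ in the $\Phi_1$-game while $\rho_2$ produces Odd's moves $S_2, S_4, \ldots$ in the $\Phi_2$-game, with Eve's role in each game filled by the opposing strategy's outputs. Since both strategies win, the union $\bigcup_n S_n$ is simultaneously isomorphic to $\Phi_1$ and to $\Phi_2$, so $\Phi_1 \cong \Phi_2$.

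For JEP of $\FMp$, I will invoke $\FMp$-universality of $\Phi$ directly. Given $S_1, S_2 \in \FMp$, embed each into $\Phi$; writing $\Phi = \bigcup_n p_n$ with $p_n \in \FMp$ an increasing chain, the two finitely generated images both fit inside a single $p_n$, which is then the desired joint embedding target.

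For WAP, given $A \in \FMp$, I embed it into $\Phi$ via some $\iota: A \to \Phi$ with image $A_0 \leq \Phi$, and apply condition (a) of Proposition \ref{pr:WeakInj} to $A_0$ to obtain $A' \in \FMp$ with $A_0 \leq A' \leq \Phi$ such that every $U \in \FMp$ with $A' \leq U$ embeds into $\Phi$ via some map fixing $\dom(A_0)$; the compatibility $\phi \circ U \subseteq \Phi \circ \phi$ together with $A_0 \leq \Phi$ forces such a map to fix all of $\zdef(A_0)$ pointwise. I claim $A'$ together with $\tau := \iota$ witnesses WAP. Given embeddings $\phi_i: A' \to B_i$ in $\FMp$, extend each $\phi_i$ to some $g_i \in G$ and define the pulled-back partial bijection $B_i'(x) := g_i^{-1}(B_i(g_i(x)))$; a direct check shows $B_i' \in \FMp$, $A' \leq B_i'$, and that $g_i^{-1}$ is a $G$-extendable isomorphism $B_i \to B_i'$. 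By the fixing property of $A'$, choose embeddings $\beta_i: B_i' \to \Phi$ with $\beta_i \uhr \zdef(A_0) = \Id$. Then $C := \beta_1(B_1') \cup \beta_2(B_2')$ is a sub-partial-bijection of $\Phi$, hence an element of $\FMp$, and the compositions $\psi_i := \beta_i \circ g_i^{-1}: B_i \to C$ are embeddings satisfying $\psi_i \circ \phi_i \circ \iota = \iota$ for both $i$ (by the $g_i \circ g_i^{-1}$ cancellation followed by $\beta_i$ fixing $A_0$), which establishes the desired amalgamation equation.

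The main obstacle is this WAP step, specifically the pull-back construction converting the abstract embedding $\phi_i: A' \to B_i$ into a literal extension $A' \leq B_i'$ so that the fixing-$A_0$ property of $A'$ can be applied. Once that is in place, the amalgamation happens naturally inside $\Phi$, with $C$ built as the union of the $\beta_i$-images.
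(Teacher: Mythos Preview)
Your proof is correct and follows essentially the same route as the paper: uniqueness via interleaving Odd's two winning strategies, JEP via universality, and WAP by amalgamating inside $\Phi$ after applying Proposition~\ref{pr:WeakInj}(a). Where the paper writes ``without loss of generality $S \leq \Phi$'' and ``without loss of generality $T \leq U,V$ with $\phi,\psi$ the identity on $T$'', you make the underlying conjugation explicit via the pull-back $B_i' = g_i^{-1} B_i\, g_i$; you are also slightly more careful than the paper in justifying that the embeddings $\beta_i$ fix all of $\zdef(A_0)$ (not just $\dom(A_0)$), which is what the amalgamation equation actually requires.
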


\begin{proof}
Suppose that $\Phi$, $\Psi \in \sigma \FMp$ are weakly $\FMp$-injective. By Theorem \ref{th:InjWins}, Odd has a winning strategy in both $BM_p(G,\Phi)$ and $BM_p(G,\Psi)$, so, in the game $BM_p(G,\Phi)$, Eve can start with an arbitrary $S_0$, and then use Odd's winning strategy for $BM_p(G,\Psi)$, while Odd uses his winning strategy for $BM_p(G,\Phi)$. Then the obtained chain $\bigcup_n S_n$ is isomorphic to both $\Phi$ and $\Psi$.

%In order to show JEP, for fixed $S,T \in \FMp$, we let Eve play $S_0=S$, and $S_2=T$, while Odd plays a winning strategy.
%Fix $T \leq (M,\phi)$ given by Proposition \ref{pr:WeakInj}(a), and fix $U,V \in \KK_M$ with $T \leq U,V$. Clearly, we can assy 

JEP directly follows from $\KK_G$-universality of $\Phi$: as any $S,T \in \KK_G$ can be embedded in $\Phi$ via some $\phi:S \rightarrow \Phi$, $\psi:T \rightarrow \Phi$, the element generated by $\Phi \uhr \rng(\phi) \cup \Phi \uhr \rng(\psi)$ witnesses that $S$ and $T$ can be jointly embedded in an element of $\KK_G$.
In order to show WAP, fix $S \in \FMp$. Without loss of generality, we can assume that $S \leq \Phi$. Find $T \leq \Phi$ with $S \leq T$, using Proposition \ref{pr:WeakInj}(a). Fix $U,V \in \KK_G$, and embeddings $\phi:T \rightarrow U$, $\psi:T \rightarrow V$. Without loss of generality, we can assume that actually $T \leq U,V$ and $\phi, \psi$ are the identity mappings on $T$. By Proposition \ref{pr:WeakInj}(a), there exist embeddings $\phi':U \rightarrow \Phi$, $\psi':V \rightarrow \Phi$ such that $\phi'$ and $\psi'$ are the identity on $S$. Thus, the element generated by $\Phi \uhr \rng(\phi') \cup \Phi \uhr \rng(\psi')$ witnesses that $U$ and $V$ can be amalgamated over $S$ in $\KK_G$.
\end{proof}

\begin{theorem}
\label{th:Comeager}
Let $M$ be a countable structure, and let $G \leq \Sym(M)$ be a closed subgroup. The following are equivalent:

\begin{enumerate}
\item $\KK_G$ has JEP and WAP,
\item there is a weakly $\KK_G$-injective $\Phi \in G$,
\item there is $\Phi \in G$ such that Odd has a winning strategy in $BM_p(G,\Phi)$,
\item $G$ has a comeager conjugacy class.
\end{enumerate}
\end{theorem}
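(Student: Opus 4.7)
The plan is to establish the cycle $(1) \Leftrightarrow (2) \Leftrightarrow (3) \Leftrightarrow (4)$, invoking the preceding theorems for the first two equivalences and a Banach--Mazur game argument for the last. The equivalence $(1) \Leftrightarrow (2)$ follows immediately from Theorems \ref{th:WeakLim} and \ref{th:InjJEPWAP}. For $(2) \Rightarrow (3)$ I would cite Theorem \ref{th:InjWins}; for $(3) \Rightarrow (2)$, given $\Phi \in G$ such that Odd wins $BM_p(G,\Phi)$, view $\Phi$ as an element of $\sigma \KK_G$ and observe that $\Phi$ must be weakly $\KK_G$-injective, since otherwise Theorem \ref{th:NotInjLoses} would hand Eve a winning strategy in the very same game, contradicting Odd's (a single play cannot be a win for both players).

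The substantive part is $(3) \Leftrightarrow (4)$, where the idea is to read $BM_p(G,\Phi)$ as a Banach--Mazur game in the Polish group $G$ computing the Baire category of the conjugacy class $[\Phi] := \{g^{-1}\Phi g : g \in G\}$. The key translation, via Remark \ref{re:Emb}(2), is that $[\Phi] = \{g \in G : g \cong \Phi \text{ in } \sigma\KK_G\}$, so Odd's victory condition $\bigcup_n S_n \cong \Phi$ coincides with membership in $[\Phi]$, provided the players arrange (as they may) for $\bigcup_n S_n$ to be a bijection of $M$. For $(4) \Rightarrow (3)$ I would write $[\Phi] \supseteq \bigcap_n D_n$ with $D_n$ dense open in $G$, and have Odd, on his $k$-th move after Eve's $S_{2k}$, choose $S_{2k+1} \in \KK_G$ extending $S_{2k}$ so that every $g \in G$ extending $S_{2k+1}$ lies in $D_k$ and so that $\dom(S_{2k+1}) \cap \rng(S_{2k+1})$ contains the $k$-th point of a fixed enumeration of $M$; then $\Psi := \bigcup_n S_n$ is a bijection of $M$ lying in $\bigcap_n D_n \subseteq [\Phi]$, so $\Psi \cong \Phi$ and Odd wins. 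For $(3) \Rightarrow (4)$ I would modify a winning strategy of Odd so as also to enumerate $M$; each play then produces a unique $\Psi \in G$ that is always in $[\Phi]$, and the standard Banach--Mazur theorem yields comeagerness of $[\Phi]$.

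The main obstacle I expect is the careful bridging between the poset $\KK_G$ and the product topology on $G$: in signatures with function symbols a finitely generated substructure may be infinite, and then $V_S := \{g \in G : S \subseteq g\}$ for $S \in \KK_G$ is only $G_\delta$, not necessarily open. The fix is to invoke density at the level of finite restrictions: every nonempty basic open of $G$ has the form $V_T$ for a finite partial $G$-extendable $T$, and any such $T$ extends to some $S \in \KK_G$ by taking $S = g \uhr \langle \dom(T) \rangle$ for an arbitrary $g \in G$ extending $T$. Thus moves in each game refine into moves in the other, and the remaining steps are routine bookkeeping.
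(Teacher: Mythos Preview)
Your proposal is correct and follows the paper's approach exactly: the paper derives $(1)\Leftrightarrow(2)$ from Theorems \ref{th:WeakLim} and \ref{th:InjJEPWAP}, $(2)\Leftrightarrow(3)$ from Theorems \ref{th:NotInjLoses} and \ref{th:InjWins}, and for $(3)\Leftrightarrow(4)$ simply identifies $BM_p(G,\Phi)$ with the classical Banach--Mazur game $G^{**}([\Phi],G)$ via Remark \ref{re:Emb}(2). Your explicit unpacking of $(3)\Leftrightarrow(4)$ and your caution about the case of function symbols are in fact more careful than the paper's one-line identification, which does not address that issue.
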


\begin{proof}
The equivalence (1)$\Leftrightarrow$(2) follows from Theorems \ref{th:WeakLim} and \ref{th:InjJEPWAP}. The equivalence (2)$\Leftrightarrow$(3) follows from Theorems \ref{th:NotInjLoses} and \ref{th:InjWins}. To show that (3)$\Leftrightarrow$(4), observe that, by Remark \ref{re:Emb}(2), if $\Phi \in G$, we can think of $BM_p(G,\Phi)$ as the original Banach--Mazur game $G^{**}(C,G)$, played in the Polish space $G$, with the target set $C$ defined as the conjugacy class of $\Phi$.
Then the assumption that Odd has a winning strategy is equivalent to the assumption that $C$ is comeager.

\end{proof}

\begin{remark}
Note that, by Theorems \ref{th:InjWins} and \ref{th:InjJEPWAP}, in (2) and (3), we could replace the condition $\Phi \in G$ by the condition $\Phi \in \sigma \KK_G$.
\end{remark}

\begin{theorem}
\label{th:Dense}
Let $M$ be a countable structure, and let $G \leq \Sym(M)$ be a closed subgroup. The following are equivalent:

\begin{enumerate}
\item $\KK_G$ satisfies JEP,
\item there is a $\KK_G$-universal $\Phi \in G$,
\item $G$ has a dense conjugacy class.
\end{enumerate}
\end{theorem}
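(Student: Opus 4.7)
My plan is to mirror the proof of Theorem~\ref{th:Comeager}, stripped of WAP and the Banach--Mazur machinery: density is a purely topological condition, so no infinite game is needed. Concretely, I would establish the two pairwise equivalences (1)$\Leftrightarrow$(2) and (2)$\Leftrightarrow$(3).

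For (1)$\Rightarrow$(2), I would simply repeat the Rasiowa--Sikorski construction from the proof of Theorem~\ref{th:WeakLim}, using the poset $P=\KK_G$ ordered by inclusion together with only the cofinal families $F_m=\{V\in\KK_G:m\in\dom(V)\cap\rng(V)\}$ and $E_S=\{V\in\KK_G:S\text{ embeds in }V\}$; the families $D_{S,T,U}$ play no role once weak injectivity is dropped. Cofinality of the $F_m$ follows from $G$-extendability exactly as in Theorem~\ref{th:WeakLim}, and cofinality of the $E_S$ from JEP. The generic chain $\Phi=\bigcup p_n$ is then a bijection of $M$ (by the $F_m$'s), hence lies in $G$ (which is closed in $\Sym(M)$), and is $\KK_G$-universal (by the $E_S$'s). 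The converse (2)$\Rightarrow$(1) is a direct transcription of the JEP half of Theorem~\ref{th:InjJEPWAP}: given $S,T\in\KK_G$, universality yields embeddings $\phi\colon S\to\Phi$ and $\psi\colon T\to\Phi$, and the element generated by $\Phi\uhr\rng(\phi)\cup\Phi\uhr\rng(\psi)$ jointly embeds $S$ and $T$ into an element of $\KK_G$.

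For (2)$\Leftrightarrow$(3), I would unfold the definition of the product topology on $G$. For (2)$\Rightarrow$(3), fix a $\KK_G$-universal $\Phi\in G$ and a basic neighborhood $\{h\in G:h\uhr F=g\uhr F\}$ of some $g\in G$; choose $S\in\KK_G$ with $S\subseteq g$ and $F\subseteq\dom(S)$, apply universality to obtain an embedding $\phi\colon S\to\Phi$, and extend $\phi$ to some $k\in G$. The relation $\phi\circ S\subseteq\Phi\circ\phi$ then gives $(k^{-1}\Phi k)(a)=S(a)=g(a)$ for all $a\in\dom(S)\supseteq F$, so $k^{-1}\Phi k$ sits in the neighborhood and the conjugacy class of $\Phi$ is dense. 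For (3)$\Rightarrow$(2), pick $\Phi$ in the dense conjugacy class $C$; given $S\in\KK_G$ with $G$-extension $g_S$, density of $C$ inside $\{h\in G:h\uhr(\dom(S)\cup\rng(S))=g_S\uhr(\dom(S)\cup\rng(S))\}$ produces $k\in G$ with $(k^{-1}\Phi k)(a)=S(a)$ for $a\in\dom(S)$, and reading this the other way shows that $k\uhr(\dom(S)\cup\rng(S))$ embeds $S$ into $\Phi$, so $\Phi$ is $\KK_G$-universal.

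The only step that requires care, rather than a new idea, is the same bookkeeping already present in Theorem~\ref{th:WeakLim}: checking that the auxiliary witnesses produced along the way --- the enlargements of $V\in\KK_G$ to cover a given point $m$, and the ``finite pieces'' $S\subseteq g$ containing a prescribed finite $F$ --- can always be arranged to be $G$-extendable bijections between finitely generated substructures. Once that is granted, as it is in Theorem~\ref{th:WeakLim}, no further subtlety arises.
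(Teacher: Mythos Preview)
Your proposal is correct and follows essentially the same approach as the paper. The paper organizes the proof as the cycle (1)$\Rightarrow$(2)$\Rightarrow$(3)$\Rightarrow$(1), with a direct interleaved construction for (1)$\Rightarrow$(2) rather than an explicit appeal to Rasiowa--Sikorski, and its (3)$\Rightarrow$(1) is your (3)$\Rightarrow$(2) and (2)$\Rightarrow$(1) compressed into a single line, but the underlying arguments are the same.
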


\begin{proof}
In order to prove (1)$\Rightarrow$(2), fix an enumeration $\{m_{2k+1}\}$ of $M$,  an enumeration $\{T_{2k}\}$ of $\KK_G$, put $S_0=T_0$, and let $S_n$, $n>0$, be an increasing sequence of elements of $\KK_G$ obtained by making sure that $m_n \in \dom(S_n) \cap \rng(S_n)$ at odd indices, and by applying JEP to $S_{n-1}$ and $T_n$ at even indices (so that the identity embeds $S_{n-1}$ into $S_n$.) Then $\Phi=\bigcup_n S_n$ is as required.

To prove that (2)$\Rightarrow$(3), fix a $\KK_G$-universal $\Phi \in G$, fix $\Psi \in G$, and $S \in \KK_G$ such that $S \leq \Psi$. Let $\Xi$ be an element of $G$ extending an embedding of $S$ into $\Phi$. Then $S \leq \Xi^{-1} \circ \Phi \circ \Xi$, so $\Xi^{-1} \circ \Phi \circ \Xi$ is in the neighborhood of $\Psi$ determined by $S$ in $G$. As $\Psi$ and $S$ were arbitrary, this shows that the conjugacy class of $\Phi$ is dense in $G$. The implication (3)$\Rightarrow$(1) is similar: if $\Phi \in G$ has a dense conjugacy class, then any $S,T \in \KK_G$ can be embedded in $\Phi$, and thus in some $\phi \subset \Phi$ with $\phi \in \KK_G$.

\end{proof}

Notice that in the proofs of the above results, we never use the fact that $\KK_G$ has HP. This means that we could also consider some suitable (cofinal) subfamily of $\KK_G$. In particular, if $M$ is the limit of a weak \fra class, and $G=\Aut(M)$, instead of $\KK_G$ we could use the family $\KK'_G$ of all partial isomorphisms $\phi: B \rightarrow C$ that can be extended to partial isomorphisms $\phi':B' \rightarrow C'$, where $B \leq B'$, $C \leq C'$ and $\dom(\phi')$ is $\dom(\phi)$-good. The benefit of considering $\KK'_G$ instead of $\KK_G$ is that the original requirement that $\phi$ is $G$-extendable cannot be verified internally, i.e., `inside' of $\phi$. As a matter fact, if $M$ is the limit of a weak \fra class, with only slight modifications of the arguments, we could obtain an analogous characterization of the existence of a dense or comeager conjugacy class, in terms of $\KK'_G$ equipped with all embeddings (not only $G$-extendable embeddings) of systems in $\KK'_G$.

Also, exactly the same proofs work if, for a given $n \geq 1$, we replace $\KK_G$ with the family $\KK_{G,n}$, and we replace the game $BM_p(G,\Phi)$ with an analagous game $BM_p(G,\Phi_1, \ldots, \Phi_n)$. Thus, we get

\begin{theorem}
\label{th:ComeagerN}
	Let $G$ be a countable structure, let $G \leq \Sym(M)$ be a closed subgroup, and let $n \geq 1$. The following are equivalent:	
	\begin{enumerate}
		\item $\KK_{G,n}$ has JEP and WAP,
		\item there are $\Phi_1, \ldots, \Phi_n \in G$ such that $(\Phi_1, \ldots, \Phi_n)$ is weakly $\KK_{G,n}$-injective,
		\item there are $\Phi_1, \ldots, \Phi_n \in G$ such that Odd has a winning strategy in $BM_p(G, \Phi_1, \ldots, \Phi_n)$,
		\item $G$ has a comeager $n$-diagonal conjugacy class.
	\end{enumerate}
\end{theorem}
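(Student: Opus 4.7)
The plan is to rerun the proof of Theorem \ref{th:Comeager} with $\KK_G$ systematically replaced by $\KK_{G,n}$, treating an $n$-tuple $\bar{S}=(S_1,\ldots,S_n)$ as a single object under the ordering $\bar{S}\leq\bar{T}$ and the isomorphism relation from Section 2, where a single $G$-extendable bijection is required to intertwine each coordinate. The definitions of $\KK_{G,n}$-universal and weakly $\KK_{G,n}$-injective are the obvious tuple analogs, and Proposition \ref{pr:WeakInj} transfers verbatim: its proof only uses that any isomorphism in $\KK_{G,n}$ between two elements extends to a single element of $G$, which remains true because one can still fix $\Xi\in G$ extending $\phi_2^{-1}\circ\phi_1$ and push $\bar{T}_1$ forward coordinatewise.

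Next I would prove the analog of Theorem \ref{th:WeakLim} via Rasiowa--Sikorski on $\KK_{G,n}$ ordered by inclusion. The three families of cofinal sets are $F_m=\{\bar{V}\in\KK_{G,n}:m\in\dom(V_i)\cap\rng(V_i)\text{ for all }i\leq n\}$, the set $E_{\bar{S}}$ of elements into which $\bar{S}$ embeds, and the set $D_{\bar{S},\bar{T},\bar{U}}$ defined exactly as before but at the tuple level. Cofinality of $F_m$ follows from $G$-extendability of each coordinate, enlarging one $V_i$ at a time; of $E_{\bar{S}}$ from JEP of $\KK_{G,n}$; and of $D_{\bar{S},\bar{T},\bar{U}}$ from WAP of $\KK_{G,n}$, together with the identity-embedding reduction used in the proof of Theorem \ref{th:WeakLim}, which still goes through because a $G$-extendable tuple embedding is realised by a single diagonal action of an element of $G$. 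The resulting union yields $(\Phi_1,\ldots,\Phi_n)\in G^n$ that is weakly $\KK_{G,n}$-injective, where $\Phi_i\in G$ follows from closure of $G$ in $\Sym(M)$ applied in each coordinate.

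The game $BM_p(G,\Phi_1,\ldots,\Phi_n)$ is defined in the obvious way: both players play increasing elements of $\KK_{G,n}$, and Odd wins iff the union is isomorphic in $\sigma\KK_{G,n}$ to $(\Phi_1,\ldots,\Phi_n)$. The proofs of Theorems \ref{th:NotInjLoses}, \ref{th:InjWins}, and \ref{th:InjJEPWAP} now transfer with only notational changes. Eve's strategy in the non-injective case applies the failure of Proposition \ref{pr:WeakInj}(b) at the tuple level; Odd's bookkeeping in the injective case enumerates $M$ and at each even step fixes a tuple-level embedding $\phi_k$ extending $\phi_{k-2}$ with $m_k\in\bigcap_i\dom((S'_k)_i)\cap\rng(\phi_k)$; uniqueness and the derivation of JEP\,$+$\,WAP from existence of a weakly $\KK_{G,n}$-injective tuple go through unchanged. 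Together these establish (1)$\Leftrightarrow$(2)$\Leftrightarrow$(3).

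For (3)$\Leftrightarrow$(4), the $n$-tuple analog of Remark \ref{re:Emb}(2) says that two tuples $(\Phi_1,\ldots,\Phi_n),(\Psi_1,\ldots,\Psi_n)\in G^n$ are isomorphic in $\sigma\KK_{G,n}$ precisely when they are diagonally $G$-conjugate, because a single $g\in G$ intertwines $\Phi_i$ with $\Psi_i$ for every $i$ iff $g^{-1}\Phi_i g=\Psi_i$ for every $i$. Hence, when $(\Phi_1,\ldots,\Phi_n)\in G^n$, the game $BM_p(G,\Phi_1,\ldots,\Phi_n)$ coincides with the classical Banach--Mazur game $G^{**}(C,G^n)$ on the Polish space $G^n$, with target the $n$-diagonal conjugacy class $C$ of $(\Phi_1,\ldots,\Phi_n)$; so Odd has a winning strategy iff $C$ is comeager. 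The point requiring the most care is the WAP-cofinality step: one must check that the reduction to identity embeddings still works at the tuple level, which is fine because a single $G$-extendable injection acts diagonally on all coordinates simultaneously and so can be absorbed into conjugation without disturbing the tuple structure.
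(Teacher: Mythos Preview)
Your proposal is correct and takes essentially the same approach as the paper: the paper's own ``proof'' of Theorem~\ref{th:ComeagerN} is just the one-line remark that ``exactly the same proofs work if, for a given $n \geq 1$, we replace $\KK_G$ with the family $\KK_{G,n}$, and we replace the game $BM_p(G,\Phi)$ with an analogous game $BM_p(G,\Phi_1,\ldots,\Phi_n)$.'' You have spelled out in more detail than the paper itself how each ingredient (Proposition~\ref{pr:WeakInj}, Theorems~\ref{th:WeakLim}--\ref{th:InjJEPWAP}, and the Banach--Mazur identification) transfers to the tuple setting, which is exactly what is intended.
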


\begin{theorem}
\label{th:DenseN}
	Let $M$ be a countable structure, let $G \leq \Sym(M)$ be a closed subgroup, and let $n \geq 1$. The following are equivalent:
	
	\begin{enumerate}
		\item $\KK_{G,n}$ satisfies JEP,
		\item there are $\Phi_1, \ldots, \Phi_n \in G$ such that $(\Phi_1, \ldots, \Phi_n)$ is $\KK_{G,n}$-universal,
		\item $G$ has a dense $n$-diagonal conjugacy class.
	\end{enumerate}
\end{theorem}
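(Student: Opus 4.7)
The plan is to adapt the proof of Theorem \ref{th:Dense} to $n$-tuples, replacing $\KK_G$ with $\KK_{G,n}$ and the single permutation $\Phi$ with a tuple $\bar{\Phi} = (\Phi_1, \ldots, \Phi_n)$ throughout. All three implications go through coordinatewise, with only bookkeeping required to track the $n$ coordinates and the diagonal conjugation action on $G^n$.

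For (1) $\Rightarrow$ (2), I would fix an enumeration $\{m_{2k+1}\}$ of $M$ and an enumeration $\{\bar{T}_{2k}\}$ of $\KK_{G,n}$, set $\bar{S}_0 = \bar{T}_0$, and construct an increasing chain $\bar{S}_0 \leq \bar{S}_1 \leq \ldots$ in $\KK_{G,n}$ as follows. At odd indices $k$, I extend $\bar{S}_{k-1}$ coordinate by coordinate, using $G$-extendability of each $(\bar{S}_{k-1})_i$, so that $m_k \in \dom((\bar{S}_k)_i) \cap \rng((\bar{S}_k)_i)$ for every $i \leq n$. At even indices $k > 0$, I apply JEP of $\KK_{G,n}$ to absorb $\bar{T}_k$ into $\bar{S}_k$ with $\bar{S}_{k-1}$ embedded into $\bar{S}_k$ by the identity. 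Setting $\Phi_i = \bigcup_k (\bar{S}_k)_i$, each $\Phi_i$ is a bijection of $M$; since each $(\bar{S}_k)_i$ extends to an element of $G$ agreeing with $\Phi_i$ on its domain and range, closedness of $G$ in $\Sym(M)$ gives $\Phi_i \in G$. The $\KK_{G,n}$-universality of $\bar{\Phi}$ is then immediate from the even-index amalgamations.

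For (2) $\Rightarrow$ (3), given a $\KK_{G,n}$-universal $\bar{\Phi} \in G^n$ and an arbitrary basic open neighborhood of some $\bar{\Psi} \in G^n$ (determined by some $\bar{S} \in \KK_{G,n}$ with $\bar{S} \leq \bar{\Psi}$), I use universality to obtain a $G$-extendable embedding $\phi$ of $\bar{S}$ into $\bar{\Phi}$ and extend $\phi$ to some $\Xi \in G$. The conjugate tuple $(\Xi^{-1} \Phi_1 \Xi, \ldots, \Xi^{-1} \Phi_n \Xi)$ then extends $\bar{S}$ coordinatewise and so lies in the prescribed neighborhood of $\bar{\Psi}$. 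For (3) $\Rightarrow$ (1), density of the conjugacy class of some $\bar{\Phi}$ means that every basic neighborhood determined by any $\bar{S} \in \KK_{G,n}$ meets the class, yielding a $G$-extendable embedding of $\bar{S}$ into $\bar{\Phi}$; given $\bar{S}, \bar{T} \in \KK_{G,n}$, restricting $\bar{\Phi}$ to a finitely generated piece of $M$ containing the images of both embeddings produces an element of $\KK_{G,n}$ that witnesses JEP.

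The only subtlety, inherited from the $n = 1$ case, is at (1) $\Rightarrow$ (2): one must check that the coordinatewise union $\Phi_i = \bigcup_k (\bar{S}_k)_i$ of $G$-extendable partial bijections lies in $G$ for each $i$. This is ensured by closedness of $G \leq \Sym(M)$ together with the odd-index steps, which force each $\Phi_i$ to be a genuine bijection of $M$ that is approximated in the product topology by elements of $G$.
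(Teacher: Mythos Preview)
Your proposal is correct and matches the paper's approach exactly: the paper does not even write out a separate argument for Theorem~\ref{th:DenseN}, but simply remarks that ``exactly the same proofs work'' when $\KK_G$ is replaced by $\KK_{G,n}$, which is precisely the coordinatewise adaptation of Theorem~\ref{th:Dense} that you describe. Your explicit mention of why each $\Phi_i$ lands in $G$ (closedness of $G$ in $\Sym(M)$ together with the odd-index bookkeeping) makes transparent a point the paper leaves implicit here but spells out in the proof of Theorem~\ref{th:WeakLim}.
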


\begin{corollary}
Let $M$ be a countable structure. The group $\Aut(M)$ has ample generics if and only if $\KK_n=\KK_{\Aut(M),n}$ has JEP and WAP for every $n \geq 1$.
\end{corollary}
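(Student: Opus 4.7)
The plan is to observe that this corollary is essentially a direct specialization of Theorem \ref{th:ComeagerN} to the case $G = \Aut(M)$, quantified over all $n \geq 1$. The key point is that $\Aut(M)$ is automatically a Polish group: it sits inside $\Sym(M)$ as the subgroup of permutations preserving every relation and function in the signature, which is a closed condition in the product topology, and $\Sym(M)$ is itself Polish. Hence all the standing hypotheses required to apply Theorem \ref{th:ComeagerN} with $G = \Aut(M)$ are satisfied.

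With that setup, I would simply unwind the definition: by the definition recalled in the introduction, $\Aut(M)$ has \emph{ample generics} precisely when, for every $n \geq 1$, there exists a comeager $n$-diagonal conjugacy class in $\Aut(M)$; that is, condition (4) of Theorem \ref{th:ComeagerN} holds for every $n$. The equivalence (1)$\Leftrightarrow$(4) of that theorem, applied for each $n$ separately, says that the existence of a comeager $n$-diagonal conjugacy class is equivalent to $\KK_{\Aut(M),n}$ having JEP and WAP. Quantifying this equivalence over all $n \geq 1$ yields the statement of the corollary.

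There is no real obstacle, and no calculation to carry out; the entire content is bookkeeping of definitions. The only minor point worth flagging explicitly in the write-up is why $\Aut(M)$ qualifies as a Polish subgroup of $\Sym(M)$ in the sense of the theorem, so that we are actually licensed to invoke Theorem \ref{th:ComeagerN}, and the fact that the equivalence is applied for each $n$ independently before the universal quantifier is reintroduced.
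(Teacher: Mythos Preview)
Your proposal is correct and matches the paper's approach: the corollary is stated without proof precisely because it is an immediate specialization of Theorem~\ref{th:ComeagerN} to $G=\Aut(M)$, quantified over all $n\geq 1$, together with the definition of ample generics. Your remark that $\Aut(M)$ is closed in $\Sym(M)$ (hence Polish) is the only thing one might add explicitly, and you have it.
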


\section{Homogenizability of weak \fra classes}

In this section, we study homogenizability in the context of limits of weak \fra classes. For definitions of standard model-theoretic notions, the reader is referred to \cite{Ho}.

We say that a structure $M$ in signature $L$ is \emph{homogenizable} if there exist formulas $\phi_0(\bar{x}_0), \ldots, \phi_n(\bar{x}_n)$ such that, if we extend $L$ to a signature $L'$ obtained by adding new relational symbols $R_i$ of the same arity as $\phi_i$,  $i \leq n$, then there is an ultrahomogeneous structure $M'$ in signature $L'$ such that the reduct of $M'$ to $L$ is equal to $M$, and for each tuple $\bar{a}$ in $M'$, and $i \leq n$, we have that $R_i(\bar{a})$ holds in $M'$ if and only if $\phi_i(\bar{a})$ holds in $M'$. In other words, the relations $R_i$ are definable in $M$, and so, in particular, $\Aut(M)=\Aut(M')$.

\begin{proposition}
\label{pr:ExClosed}
Let $M$ be the limit of a weak \fra class in a finite, relational signature. Then $M$ is existentially closed.
\end{proposition}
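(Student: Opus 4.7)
The plan is to show that whenever $M \subseteq N$ with $N \models \Th_\forall(M)$ --- equivalently, $\age(N) \subseteq \KK$ in a finite relational signature --- every existential formula with parameters in $M$ true in $N$ is already true in $M$.

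Given such an $N$ and a quantifier-free $\phi(\bar x, \bar y)$ with $\bar a \in M$ and $\bar b \in N$ satisfying $N \models \phi(\bar a, \bar b)$, I would set $A = \gen{\bar a} \leq M$ and $B = \gen{\bar a, \bar b} \leq N$. Because the signature is finite relational, $B$ is finite, and HP applied to $\age(N) \subseteq \KK$ gives $B \in \KK$. By the structural analogue of Proposition \ref{pr:WeakInj}(a) for the weak \fra limit --- the extension property of Krawczyk and Kubi\'{s} \cite{KrKu} --- pick a finitely generated $A^* \leq M$ with $A \leq A^*$ such that every $C \in \KK$ with $A^* \leq C$ admits an embedding $\iota \colon C \to M$ satisfying $\iota \uhr A = \Id_A$.

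The key move is to form the required $C$ \emph{inside} $N$: take $C := \gen{A^* \cup B} \leq N$, a finite substructure of $N$, hence an element of $\KK$, containing both $A^*$ and $B$. The extension property then yields $\iota \colon C \to M$ with $\iota \uhr A = \Id_A$, and $\bar b' := \iota(\bar b) \in M$ witnesses the existential formula in $M$, since embeddings preserve quantifier-free formulas.

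The main thing to worry about is that amalgamating $A^*$ and $B$ over $A$ inside $\KK$ can genuinely fail, as $\KK$ has only WAP and not AP. This obstacle is bypassed precisely by the observation that $N$ already supplies such an amalgam as an honest substructure; the extension property of $M$ then absorbs it back into $M$. The only non-trivial auxiliary input is the structural extension property of the weak \fra limit, which is built into the Krawczyk--Kubi\'{s} construction (cf.\ Theorem \ref{th:WeakLim}).
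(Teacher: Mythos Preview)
Your proof is correct and follows essentially the same line as the paper's: pick an $A$-good $A^*\leq M$ via the Krawczyk--Kubi\'{s} extension property (cited there as \cite[Proposition~3.1(a)]{KrKu}), realize the amalgam of $A^*$ and the witness structure over $A$ \emph{inside} $N$ as a finite substructure, and push it back into $M$ fixing $A$. The only cosmetic difference is that the paper takes $N\models\Th(M)$ while you take $N\models\Th_\forall(M)$ (so $\age(N)\subseteq\KK$), which yields a marginally stronger statement but the argument is identical.
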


\begin{proof}
%Fix such $X, Y \leq M$, and suppose that $N$ is a model of $Th(M)$, $M \leq N$.
	
%Observe that for every finite $X \leq M$ there exists $Y \leq M$ such that $X \leq Y$, and $Y$ is $X$-good.

Let $N$ be a model of $\Th(M)$ such that $M \subseteq N$. Fix a tuple $\bar{a}=(a_1,\ldots,a_n)$ in $M$, and an atomic formula $\phi(\bar{x},\bar{y})$ such that $\exists \bar{y} \phi(\bar{a},\bar{y})$ holds in $N$. Fix a tuple $\bar{c}=(c_1,\ldots,c_n)$ in $N$ such that $\phi(\bar{a},\bar{c})$ holds in $N$. As $M$ is the limit of a weak \fra class, $A=\{a_1,\ldots,a_n\}$ is contained in some finite $B \leq M$ as in \cite[Proposition 3.1(a)]{KrKu}. Because $N$ is a model of $\Th(M)$, we have that $\Age(M)=\Age(N)$, and $X=B \cup \{c_1,\ldots,c_n\} \in \Age(M)$. Therefore, by \cite[Proposition 3.1(a)]{KrKu}, the identity embedding of $A$ into $M$ can be extended to an embedding $f$ of $X$ into $M$, which means that $\phi(\bar{a},f[\bar{c}])$ holds in $M$. Thus, $M$ is existentially closed.
\end{proof}

\begin{corollary}
\label{co:ModCom}
Let $M$ be the limit $M$ of a weak \fra class in a finite, relational signature. If $M$ is $\omega$-categorical, then it is model-complete.
\end{corollary}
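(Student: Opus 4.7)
The plan is to apply Robinson's test: a complete theory $T$ is model-complete if and only if for every pair of models $A \subseteq B$ of $T$, the submodel $A$ is existentially closed in $B$. Proposition \ref{pr:ExClosed} already gives this in the case $A = M$; the task is to promote existential closedness from $M$ itself to an arbitrary model of $\Th(M)$. The bridge will be $\omega$-categoricity together with downward L\"owenheim--Skolem.

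Concretely, I would fix models $A \subseteq B$ of $\Th(M)$, a tuple $\bar{a}$ from $A$, and a quantifier-free formula $\phi(\bar{x},\bar{y})$ such that $\exists \bar{y}\,\phi(\bar{a},\bar{y})$ holds in $B$, and show that it holds in $A$. By downward L\"owenheim--Skolem, first pick a countable $A_0 \prec A$ containing $\bar{a}$, and then a countable $B_0 \prec B$ with $A_0 \subseteq B_0$. Both $A_0$ and $B_0$ are countable models of $\Th(M)$, so by $\omega$-categoricity each is isomorphic to $M$. Since existential closedness inside a model of $\Th(M)$ is preserved under isomorphism, Proposition \ref{pr:ExClosed} transfers to say that $A_0$ is existentially closed in every model of $\Th(M)$ containing it; in particular, $A_0$ is existentially closed in $B_0$.

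The conclusion is then a short chase: $\exists \bar{y}\,\phi(\bar{a},\bar{y})$ holds in $B$, hence in $B_0$ because $B_0 \prec B$; then in $A_0$ by existential closedness of $A_0$ in $B_0$; and finally in $A$ because $A_0 \prec A$. Robinson's test now yields model-completeness of $\Th(M)$.

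I do not expect any genuine obstacle here beyond bookkeeping. The only delicate point is recognizing that Proposition \ref{pr:ExClosed} really does assert that $M$ is existentially closed in every model of $\Th(M)$ that extends it (and not just, say, in structures sitting inside $\Age(M)$), and that this property is isomorphism-invariant, so it transports along the isomorphism $M \cong A_0$ supplied by $\omega$-categoricity.
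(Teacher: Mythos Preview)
Your proof is correct and follows essentially the same route as the paper: reduce to countable models via downward L\"owenheim--Skolem, invoke $\omega$-categoricity to identify them with $M$, apply Proposition~\ref{pr:ExClosed}, and conclude via the characterization of model-completeness through existential closedness. The only difference is cosmetic: the paper applies existential closedness of the countable $A_0\cong M$ directly in the large model $B$ (so your intermediate $B_0$ is not needed), and the paper explicitly disposes of the finite case with a ``without loss of generality $M$ is infinite'' remark.
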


\begin{proof}
Without loss of generality, we can assume that $M$ is infinite. Let $N \subseteq N'$ be models of $\Th(M)$. Fix a tuple $\bar{a}$ in $N$, and an atomic formula $\phi(\bar{x},\bar{y})$ such that $\exists \bar{y} \phi(\bar{a},\bar{y})$ holds in $N'$. By the Skolem-L\"{o}wenheim theorem, there exists a countably infinite model $M' \leq N$ of $\Th(M)$ that contains $\bar{a}$. As $M$ is $\omega$-categorical, $M'$ is isomorphic to $M$, and, by Proposition \ref{pr:ExClosed}, it is existentially closed. Thus, $\exists \bar{y} \phi(\bar{a},\bar{y})$ holds in $M'$, and so in $N$. This shows that every model of $\Th(M)$ is existentially closed. And it is well known (see \cite[Theorem 8.3.1(b)]{Ho}) that if every model of a theory is existentially closed, then this theory is model-complete.
\end{proof}

Let $\KK$ be a class of finite structures, and let $k,m \in  \NN$. Following \cite{Ah}, we say that $\KK$ satisfies $\mbox{SEAP}_{k,m}$ (or the \emph{$(k,m)$-subextension amalgamation failure property}) if the following holds. For any $A \in \KK$, and $B, C \in \KK$, with embeddings $\phi : A \rightarrow B$, $\psi : A \rightarrow C$,
that cannot be amalgamated over $A$, there exist $A_0 \subseteq A$, $B_0 \supseteq B$ and $C_0 \supseteq C$, $A_0,B_0,C_0 \in \KK$, with $|A_0| < k$, $|B_0|-|B| < m$, $|C_0|-|C| < m$, and with emeddings $\phi_0 : A_0 \rightarrow B_0$ and $\psi_0 : A_0 \rightarrow C_0$, where
$\phi_0 = \phi \upharpoonright A_0$ and $\psi_0 = \psi \upharpoonright A_0$, that cannot be amalgamated over $A_0$. We say that $\KK$
satisfies SEAP if it satisfies $\mbox{SEAP}_{k,m}$ for some $k,m \in \NN$.

\begin{theorem}
\label{th:WeakHomo}	
The limit $M$ of a weak \fra class in a finite, relational signature is homogenizable if and only if $M$ is $\omega$-categorical and $\age(M)$ has SEAP.
\end{theorem}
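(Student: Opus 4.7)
The plan is to deduce this theorem by combining Ahlman's characterization of homogenizable structures with Corollary \ref{co:ModCom}. Ahlman's theorem, as formulated in \cite{Ah}, characterizes homogenizability for a countable structure $M$ in a finite relational signature by three conditions: $M$ is $\omega$-categorical, $M$ is model-complete, and $\age(M)$ has SEAP. Our task is therefore to show that for a limit of a weak \fra class, the model-completeness condition becomes automatic once $\omega$-categoricity is assumed, so that Ahlman's criterion simplifies to the statement above.

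For the forward implication, I would simply invoke Ahlman's theorem: if $M$ is homogenizable, then by his result $M$ must be $\omega$-categorical and $\age(M)$ must satisfy SEAP (along with model-completeness, which we do not need to record in the statement). Nothing about the weak \fra nature of $M$ is used here; this direction works for any $M$ in the class of structures to which Ahlman's theorem applies.

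For the backward implication, assume $M$ is $\omega$-categorical and $\age(M)$ has SEAP. To apply Ahlman's theorem in the reverse direction, I need the third ingredient, model-completeness of $M$. This is exactly what Corollary \ref{co:ModCom} delivers: the limit of a weak \fra class in a finite relational signature that is $\omega$-categorical is automatically model-complete. Feeding $\omega$-categoricity, SEAP, and model-completeness into Ahlman's criterion then yields homogenizability of $M$.

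The main obstacle is not computational but conceptual: one must verify that Corollary \ref{co:ModCom} really does plug into Ahlman's theorem exactly where needed, which in turn relies on Proposition \ref{pr:ExClosed} (existential closedness of limits of weak \fra classes) as the underlying ingredient. Once that fit is confirmed, the proof is essentially a two-line citation argument. If Ahlman's statement is instead phrased without a standalone model-completeness hypothesis (folded into some stronger condition), a mild reformulation may be required, but the weak \fra setting does not introduce any genuinely new difficulty beyond establishing the existential closedness that powers Corollary \ref{co:ModCom}.
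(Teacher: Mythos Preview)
Your proposal is correct and follows essentially the same approach as the paper: both directions rest on Ahlman's characterization together with Corollary \ref{co:ModCom}. The only minor difference is that in the forward direction the paper also invokes Corollary \ref{co:ModCom} (deriving model-completeness from the weak \fra hypothesis before applying Ahlman), whereas you read all three conditions directly off Ahlman's theorem; your version is slightly more economical, since model-completeness is already a consequence of homogenizability in Ahlman's result and the weak \fra assumption is indeed not needed for that implication.
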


\begin{proof}
	Suppose that $M$ is homogenizable. It is easy to see that it must be $\omega$-categorical, and so, by Corollary \ref{co:ModCom}, it is also model-complete. By \cite[Theorem 1.1]{Ah}, $\age(M)$ satisfies SEAP. On the other hand, if $M$ is $\omega$-categorical, by Corollary \ref{co:ModCom}, it is model-complete, and so, if $\age(M)$ satisfies SEAP, by the same theorem, $M$ is homogenizable.
\end{proof}

\subsection{An example}
It is natural to ask if there exists a limit of a weak \fra class that cannot be turned in a constructive and finitary way into a limit of a \fra class, i.e., that is not homogenizable. We sketch an example of such a class, which is a modification of a construction from \cite{KrKu2}. 

Let $L$ be a signature consisting of two binary predicates: $R$ (red) and $B$ (blue), understood as predicates denoting colored edges in a graph. By a path (tree, connected set, etc.) we mean a path (tree, connected set, etc.) in $R \cup B$, and by a monochromatic path (tree, connected set, etc.) we mean a path (tree, connected set, etc.) exclusively in $R$ or in $B$. In the case that the direction of the edges does matter, we explicitly say that a path (tree, forest, etc.) is directed. Let $\FF$ be the class of all finite structures $(A, R, B)$ in the signature $L$ with the following properties:
\begin{enumerate}
	\item the graph $(A, R \cup B)$ is an (undirected) forest, i.e., there are no (undirected) cycles in $(A, R \cup B)$, 
	\item the sets $R$ and $B$ form a partition of $R \cup B$,
	\item for every vertex $w \in A$, the set of all edges $(v,w)$ in
	$R \cup B$ is contained either in $R$ or in $B$,
	\item for every vertex $w \in A$, all directed monochromatic paths $v_1, \ldots, v_n$ ending at $w$, and such there exists $v_0 \in A$ such that $(v_0,v_1)$ has a color different than the color of edges in the path, have the same length.
\end{enumerate}

\begin{proposition}
	The class $\FF$ is a weak \fra class that does not satisfy CAP. Moreover, the limit $M$ of $\FF$ is not $\omega$-categorical, and so, in particular, not homogenizable.
\end{proposition}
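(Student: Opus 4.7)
The plan is to verify four separate claims: (a) $\FF$ is a weak \fra class, (b) $\FF$ fails CAP, (c) the limit $M$ is not $\omega$-categorical, and (d) $M$ is not homogenizable. The hereditary property and joint embedding property of $\FF$ are immediate, since each of the defining conditions (1)--(4) is visibly preserved under taking substructures and disjoint unions.

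For WAP, the strategy is to extend each $A \in \FF$ to an $A$-good structure $A'$ that pins down the canonical length at every vertex of $A$. Concretely, for each $w \in A$ with at least one incoming edge (necessarily of a single color $c$ by condition (3)), I would extend the directed monochromatic tree of color-$c$ paths ending at $w$ by adding fresh color-$c$ vertices on the short branches so that every source lies at a common depth $d_w$, and then attach a single opposite-color precursor edge at each source; condition (4) then forces the canonical length at $w$ to equal $d_w$ in $A'$ and in every further $\FF$-extension of $A'$. Given $B_1, B_2 \supseteq A'$ in $\FF$, I would amalgamate them over $A$ as the free amalgam $C$ (with $A' \setminus A$ appearing in two disjoint copies, one in each $B_i \setminus A$) and verify (1)--(4) in $C$. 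The main subtlety will be (4): the pinning guarantees that any monochromatic path in $C$ ending at a vertex of $A$ with a precursor of different color lies entirely in one of the two halves $B_i$, and hence has length $d_w$, so no length conflict arises across the amalgam.

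To show $\FF$ fails CAP, I note that any nonempty $A^* \in \FF$ contains a vertex with no incoming edges, since a finite forest on $n$ vertices with $k$ connected components has only $n-k < n$ directed edges. Fix such a root $a \in A^*$. I would define $B_1 \supseteq A^*$ by attaching new vertices $r_1, b_1$ with a red edge $(r_1, a)$ and a blue edge $(b_1, r_1)$, giving canonical length $1$ at $a$; and $B_2 \supseteq A^*$ by attaching $r_2', r_2, b_2'$ with red edges $(r_2, a), (r_2', r_2)$ and a blue edge $(b_2', r_2')$, giving canonical length $2$ at $a$. Any common extension in $\FF$ would simultaneously witness both values at $a$, contradicting (4). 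It follows that for every cofinal subclass $\KK' \subseteq \FF$ and every $A^* \in \KK'$, the corresponding $B_1, B_2$ admit no amalgam in $\FF$ over $A^*$, let alone in $\KK'$, so $\KK'$ fails AP.

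Finally, for non-$\omega$-categoricity, for each $n \geq 1$ I build the directed path $X_n$ on $n+2$ vertices $v_0, v_1, \ldots, v_{n+1}$, with blue edge $(v_0, v_1)$ and red edges $(v_i, v_{i+1})$ for $1 \leq i \leq n$; a direct check shows $X_n \in \FF$ and the canonical length at $v_{n+1}$ equals $n$, since the only monochromatic path ending at $v_{n+1}$ with a precursor of different color is the full red path starting at $v_1$. Because ``$w$ has canonical length $n$'' is first-order expressible and, by (4), the value realized at a given vertex is unique, the embeddings $X_n \hookrightarrow M$ produce vertices of arbitrarily large canonical length, exhibiting infinitely many distinct $1$-types in $M$. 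The Ryll-Nardzewski theorem then precludes $\omega$-categoricity, and Theorem \ref{th:WeakHomo} immediately yields that $M$ is not homogenizable.
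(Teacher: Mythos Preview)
Your overall plan is sound and the non-$\omega$-categoricity argument matches the paper's, but the WAP and CAP constructions both have genuine gaps.

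\textbf{WAP.} Your free amalgam of $B_1,B_2$ over the original $A$ (rather than over $A'$) need not lie in $\FF$. First, it can fail to be a forest: take $A=\{a,b\}$ with no edges; since neither vertex has an incoming edge your $A'$ equals $A$. Let $B_1=\{a,b,c\}$ with red edges $(a,c),(b,c)$ and $B_2=\{a,b,d\}$ with red edges $(a,d),(b,d)$; the free amalgam over $A$ contains the undirected cycle $a\text{--}c\text{--}b\text{--}d\text{--}a$, violating (1). Second, even for $A=\{a\}$ (so again $A'=A$), taking $B_1=\{a,r\}$ with a red edge $(r,a)$ and $B_2=\{a,s\}$ with a blue edge $(s,a)$ yields an amalgam violating (3) at $a$. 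The root cause is that you only ``pin'' vertices of $A$ that already have incoming edges; the others remain unconstrained and can receive incompatible data from the two sides. The paper avoids both problems by making the good extension \emph{connected} and ensuring every vertex of the original structure acquires an incoming edge there, and then taking the free amalgam over the full extension rather than over $A$.

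\textbf{Failure of CAP.} Your $B_1$ need not lie in $\FF$. Take $A^*=\{a,x,y,z\}$ with red edges $(a,x),(y,x)$ and a blue edge $(z,y)$; then $a$ has no incoming edge, and the red path $y\to x$ with blue precursor $z$ fixes canonical length $1$ at $x$. Your $B_1$ attaches $b_1\to r_1\to a$ (blue, then red), producing the red path $r_1\to a\to x$ of length $2$ with blue precursor $b_1$, contradicting (4) at $x$; so $B_1\notin\FF$. The paper's argument is simpler and robust: it attaches a single red edge into $a$ for one extension and a single blue edge for the other, so that condition (3), not (4), obstructs amalgamation, and since the new vertex has no precursor no downstream lengths are affected.
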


\begin{proof}
	It is clear that $\FF$ has HP and strong JEP. 
	In order to see that $\FF$ has WAP, fix $A_0 \in \FF$. For $a \in A_0$, let $l(a)$ be the length of the longest, directed monochromatic path in $A_0$ that ends at $a$. We can easily extend $A_0$ to a connected $A \in \FF$ such that for every vertex $a \in A_0$, every maximal directed monochromatic path $v_1,\ldots, v_n$ ending at $a$ has length $l(a)$, and is such that there exists $v_0 \in A$ such that $(v_0,v_1)$ is an edge in $A$ of a color different from the color of edges in the path. Then for any $B,C \in \FF$ with $A \subseteq B,C$, the free amalgam $B \cup C$ (i.e., the amalgam with no new vertices or edges added) is the desired weak amalgamation of $B$ and $C$ over $A_0$. 
	
	To see that $\FF$ does not have CAP, observe that every $A \in \FF$ contains a vertex $a$ with no incoming edges. Then we can extend $A$ to an element of $\FF$ in two ways: by adding a red edge ending at $a$, or a blue edge ending at $a$. These two extensions cannot be amalgamated.
	
	It is also easy to see that the limit $M$ of $\FF$ is not $\omega$-categorical. Let $A_n \in \FF$ with fixed $a_n \in A_n$, $n \in \NN$, be elements of the form of a directed path $v_0,v_1, \ldots, v_n=a_n$ that is not monochromatic but such that $v_1, \ldots, v_n$ is monochromatic. We can assume that each $A_n$ is a subsets of $M$. Then $a_n$ witness that there are infinitely many $1$-types in $M$.
	
\end{proof}

\textbf{Question:} Does there exist a weak \fra class whose limit is $\omega$-categorical but not homogenizable?

Finally, using results from the previous section, we point out the following fact.

\begin{proposition}
	The group $\Aut(M)$ has no dense conjugacy class. 
\end{proposition}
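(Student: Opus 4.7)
The plan is to apply Theorem \ref{th:Dense}: for $G = \Aut(M)$, the existence of a dense conjugacy class is equivalent to $\KK_G$ having JEP. Consequently, it suffices to produce two elements $S,T \in \KK_{\Aut(M)}$ that admit no joint embedding in $\KK_{\Aut(M)}$.

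The construction exploits the CAP failure of $\FF$ proved in the previous proposition. Recall that every $A \in \FF$ contains a vertex $a$ with no incoming edges, and the two natural extensions of $a$, by a red or a blue predecessor, cannot be amalgamated because of condition~(2). I intend to lift this obstruction to the level of partial automorphisms as follows. Choose $S$ to be a partial automorphism whose support realizes the ``red-predecessor'' extension of a free vertex in some copy in $M$, together with a non-trivial movement that, via $G$-extendability, forces the involved vertex to be of the corresponding ``red-preceded'' $1$-type. Symmetrically, choose $T$ realizing the ``blue-predecessor'' extension with a movement forcing the blue-preceded type. The non-$\omega$-categoricity of $M$ and the distinct $1$-types of the $a_n$ from the preceding proof provide the flexibility to choose such movements that are genuinely in $\KK_{\Aut(M)}$.

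The key claim will then be that any $U \in \KK_{\Aut(M)}$ jointly embedding $S$ and $T$ via $G$-extendable embeddings $\phi,\psi$ would force an extension $\rho \in \Aut(M)$ of $U$ to realize, at some image vertex in $M$, the two incompatible extensions of a free vertex simultaneously, contradicting condition~(2).

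The main obstacle is that JEP permits disjoint embeddings into a common $U$, and since $\FF$ has strong JEP (as the disjoint union of two elements of $\FF$ is again in $\FF$), one cannot obstruct JEP purely by a ``shared vertex'' argument on singletons. The delicate part of the argument is therefore to design the non-identity portions of $S$ and $T$ so that, under any $G$-extendable placements $\phi,\psi$ of their supports in $M$, the induced constraints on $\rho$ are linked through the $\Aut(M)$-orbit structure, and no matter how disjointly $\phi$ and $\psi$ are chosen, the required $\rho$ would have to collapse two incompatible color commitments. Pinning down this forced overlap at the orbit level is where most of the work lies.
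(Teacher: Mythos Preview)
Your plan has a genuine gap at exactly the point you flag as ``the main obstacle,'' and I do not see how the proposed mechanism can close it. The incompatibility you want to exploit---a red predecessor versus a blue predecessor at a free vertex---is a \emph{local} configuration. If $\phi$ and $\psi$ land the supports of $S$ and $T$ in disjoint finite subtrees of $M$, then the red-preceded vertex $\phi(v)$ and the blue-preceded vertex $\psi(w)$ are simply two different points of $M$, each of a perfectly legitimate $1$-type, and nothing forces a single automorphism $\rho$ to identify them or to make contradictory colour commitments at a common vertex. Invoking ``the $\Aut(M)$-orbit structure'' does not help: the vertices $a_n$ of the previous proposition witness infinitely many $1$-types, but they do not create any link between disjoint subtrees under the action of an automorphism. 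As written, the plan gives no concrete reason why the colour constraints should ever meet.

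The paper's proof bypasses this difficulty by using a \emph{global} invariant of tree automorphisms rather than a local colour type. One first notes that $M$ is a (connected) tree, and proves the elementary lemma: if $\Phi\in\Aut(M)$ fixes some vertex, then for every $y$ it fixes some vertex on the unique path $[y,\Phi(y)]$. One then exhibits a specific $S_0\in\KK_G$ (built on a six-point alternating path $a,b,c,d,e,f$, with $S_0(a)=c$, $S_0(b)=d$, $S_0(c)=e$, $S_0(d)=f$) for which the entire path $[c,S_0(c)]=\{c,d,e\}$ is moved. Any $T\in\KK_G$ fixing a point therefore cannot be jointly embedded with $S_0$: an extension $\Phi$ of a common $U$ would have a fixed point (from $T$) yet fix nothing on the image of $[c,e]$ (from $S_0$), contradicting the lemma. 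This dichotomy---``has a fixed point'' versus ``moves an entire geodesic $[y,\Phi(y)]$''---is intrinsically global and cannot be escaped by disjoint placement, which is precisely what your colour-based argument lacks.
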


\begin{proof}
Set $G=\Aut(M)$. Observe that $M$ is a tree, and there exists $S_0 \in \FF_G$, and $c,d \in \dom(S_0)$ connected by an edge, and not fixed by $S_0$. For example, take an element of $\FF$ the form $\{a,b,c,d,e,f\}$, where $(a,b),(c,d),(e,f)$ are red, and $(b,c), (d,e)$ are blue. Put $S_0(c)=e$, $S_0(d)=f$. Then, $\{a,b,c,d\}$ is $\{c,d\}$-good, and $S_0$ can be extended to $\{a,b,c,d\}$ by putting $S_0(a)=c$, $S_0(b)=d$. This means that $S_0 \in \FF_G$.   

Notice also that if a given $\Phi \in G$ fixes some $x \in M$, it must also fix some element of the unique path $[y,\Phi(y)]$ connecting $y$ and $\Phi(y)$, for any $y \in M$ that is not fixed by $\Phi$. Indeed, let $x$, $y$ be such elements. If $x \in [y,\Phi(y)]$, we are done. Otherwise, as $M$ is a tree, there must exist $z \in [y,\Phi(y)]$ such $[y,x] \cap [x,\Phi(y)]=[x,z]$. We leave it to the reader to verify that $\Phi(z)=z$. 
%Then $[x,y]=[x,\Phi(y)] \cup [\Phi(y),y]$. But $| [x,y] |= |[x,\Phi(y)] |$, so $[\Phi(y),y] \subseteq [x,y]$, which implies that $y \in [x,\Phi(y)]$. But this is possible only if $\Phi(y)=y$, which we assumed not to be the case.
In particular, the above shows that there is no joint embedding of $S_0$ and any $T \in \FF_G$ that fixes an element. By Theorem \ref{th:Dense}, there is no dense conjugacy class in $G$.
\end{proof}

Let us also briefly consider stabilizers of points in $\Aut(M)$. For a fixed $r \in M$, let $M_r$ be $M$ with $r$ regarded as a constant. After forgetting about colors and directions of edges, $M_r$ can be thought of as a regular, infinitely branching rooted tree $N_r$ with $r$ as the root. It was proved in \cite{Ma} that then all the corresponding classes of tuples of partial automorphisms have JEP and CAP, i.e., $\Aut(N_r)$ has ample generics. A straightforward modification of these arguments gives that all the classes $\FF_{\Aut(M_r),n}$ also have JEP and WAP, and so $\Aut(M_r)$ has ample generics.

\section{Ultrametric spaces}

In this section, we investigate groups of bijections of certain countable structures that are not groups of automorphisms. Recall that an \emph{ultrametric space} is a metric space $(X,d)$ whose metric satisfies a strong version of the triangle inequality:
\[ d(x,z) \leq \max(d(x,y),d(y,z)), \]
for any $x,y,z \in X$. Typically, ultrametric spaces are studied as metric spaces, i.e., with isometries as isomorphisms. However, one can also consider another natural kind of bijections: those that preserve balls. We will call such mappings \emph{ball-preserving bijections}, or, shortly, \emph{bp-bijections}. The group of all \emph{bp-automorphisms} of $X$, i.e., bp-bijections $\Phi:X \rightarrow X$,  will be denoted by  $\BP(X)$. A partial bp-automorphism of $X$ is a bp-bijection $p:A \rightarrow B$, where $A,B$ are finite subsets of $X$.%, which can be extended to a bp-automorphism of $X$.

Let $X$ be an ultrametric space. By a ball in $X$, we mean a set of the form
\[ B_r(x)=\{y \in X: d(x,y)<r \}, \]
$x \in X$, $r>0$, and we will assume that balls always `remember', or come equipped with, their radius. It is easy to see that for any two balls in an ultrametric space, either one is contained in the other, or they are disjoint. 

For $r>0$, by an \emph{$r$-polygon} in $X$, we mean a set $P$ such that $d(x,y)=r$ for $x \neq y \in P$. For a ball $B$ in $X$ of radius $r$, by $\PP(B)$ we denote the (pairwise disjoint) family of all balls $B'$ in $X$ of radius $r$, and such that $B'=B$ or $\dist(B', B)=r$. If $B' \in \PP(B)$ and $B' \neq B$, we say that $B'$ is \emph{adjacent} to $B$.

Let $N \in \NN \cup \{\NN\}$. By $\KK_N$, we denote the class of all finite ultrametric spaces with rational distances, and such that every $r$-polygon has size at most $N$. We will regard $\KK_N$ as a class of structures with language consisting of binary relations $d_q(x,y)$, $q \in \QQ$, defined by $d_q(x,y)$ iff $d(x,y)=q$. Then the \fra limit $\Ury_N$ of $\KK_N$ is called the \emph{rational $N$-ultrametric Urysohn space}.

Actually, we will be mostly interested in \emph{ordered ultrametric spaces}, i.e., ultrametric spaces $(X,d)$ endowed with a \emph{convex} linear ordering, i.e., a linear ordering $\prec$ satisfying
\[ x \prec y \prec z \mbox{ implies } d(x,y) \leq d(x,z). \]
%\[ x<z \mbox{ and } d(x,y)>d(x,z) \mbox{ implies } x<y, \]
%
for $x,y,z \in X$. Equivalently, a convex ordering of $X$ is an ordering $\prec$ induced by some linear ordering $\prec_B$ of balls that extends the inclusion ordering. That is, for a given such ordering $\prec_B$ of balls in $X$, we define $\prec$ by
\[ x\prec y \mbox{ iff } B_x \prec_B B_y, \]
where $B_x$, $B_y$ are the unique balls of radius equal to $d(x,y)$, and such that $x \in B_x$, $y \in B_y$. By $\KK^\prec_N$, we denote the class of finite convexly ordered ultrametric spaces with rational distances, and such that every $r$-polygon has size at most $N$. All $\KK^\prec_N$ are \fra classes, and their \fra limits $\Ury^\prec_N$ are called the \emph{ordered rational $N$-ultrametric Urysohn space}.

It is easy to verify that $\KK_N$ ($\KK^\prec_N$) with (order preserving) bp-injections as morphisms are also \fra classes. Moreover, for $N=\NN$, the ordering $\prec$ regarded as an ordering of balls in $\Ury^\prec_N$, is dense when restricted to any family $\PP(B)$. On the other hand, for finite $N$, the rigidity of $\prec$ restricted to $\PP(B)$ implies that for any balls $B, B'$ in $\Ury^\prec_N$, there is a unique $\prec$-preserving bijection between $\PP(B)$ and $\PP(B')$.

\begin{proposition}
	\label{pr:PreserveP}
	Let $(X,d_X)$, $(Y,d_Y)$ be ultrametric spaces, and let $\Phi:X \rightarrow Y$ be a bijection. The following are equivalent:
	\begin{enumerate}
		\item $\Phi$ is a bp-bijection,
		\item $\Phi$ is a bijection, and $B' \in \PP(B)$ iff $\Phi[B'] \in \PP(\Phi[B])$ for any balls $B,B'$ in $X$,
		\item $d_X(x,y)<d_X(y,z)$ iff  $d_Y(\Phi(x),\Phi(y))<d_Y(\Phi(y),\Phi(z))$ for any $x,y,z \in X$.
	\end{enumerate}	
\end{proposition}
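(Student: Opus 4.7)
The plan is to establish (1)$\Leftrightarrow$(3) and (1)$\Leftrightarrow$(2) separately, both relying on the following elementary observation tying distances to balls in an ultrametric space: $d(x,y)<d(y,z)$ holds if and only if there exists a ball $B$ with $x,y\in B$ and $z\notin B$. The forward direction is witnessed by $B=B_{d(y,z)}(y)$, while the converse uses the isosceles identity, since $x,y\in B_r(w)$ and $z\notin B_r(w)$ force $d(x,y)<r\leq d(w,z)=d(y,z)$.

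For (1)$\Rightarrow$(3), a bp-bijection and its inverse both send balls to balls, so a ball witnessing $d_X(x,y)<d_X(y,z)$ is carried by $\Phi$ to one witnessing the analogous inequality in $Y$, and conversely via $\Phi^{-1}$. For (3)$\Rightarrow$(1), fix a ball $B=B_r(y_0)\subseteq X$; the task is to show $\Phi[B]$ is a ball in $Y$ centered at $\Phi(y_0)$. Condition (3), applied to every $x\in B$ and every $z\notin B$, turns $d_X(x,y_0)<r\leq d_X(y_0,z)$ into $d_Y(\Phi(x),\Phi(y_0))<d_Y(\Phi(y_0),\Phi(z))$. Hence $\rho:=\sup_{x\in B}d_Y(\Phi(x),\Phi(y_0))\leq\sigma:=\inf_{z\notin B}d_Y(\Phi(y_0),\Phi(z))$, and one selects a radius $s$ between $\rho$ and $\sigma$ to conclude $\Phi[B]=B_s(\Phi(y_0))$. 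Since (3) is symmetric in $\Phi$ and $\Phi^{-1}$, the latter also preserves balls, so $\Phi$ is a bp-bijection.

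For (1)$\Leftrightarrow$(2): a bp-bijection induces an isomorphism of the inclusion-trees of balls of $X$ and of $Y$, and $B'\in\PP(B)$ is exactly the relation of being a sibling of $B$ in this tree, with $B$ itself adjoined; it is therefore preserved, yielding (2). Conversely, applying (2) with $B'=B$ gives $\Phi[B]\in\PP(\Phi[B])$, which in particular forces $\Phi[B]$ to be a ball in $Y$; the iff direction of (2) gives the same for $\Phi^{-1}$, so $\Phi$ is a bp-bijection.

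The main technical hurdle is the radius-choice step in (3)$\Rightarrow$(1). A short dichotomy on whether $\rho$ and $\sigma$ are attained is required, and the only potentially problematic case---$\rho=\sigma$ with both bounds attained---is ruled out by (3) applied to a triple $(x,y_0,z)$ realizing both bounds, which would force the strict inequality $\rho<\sigma$.
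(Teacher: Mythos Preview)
Your argument for (1)$\Leftrightarrow$(3) is sound, and your (3)$\Rightarrow$(1) is actually more careful than the paper's: the paper simply picks $y$ with $C=B_{d_X(x,y)}(x)$ and asserts $\Phi[C]=B_{d_Y(\Phi(x),\Phi(y))}(\Phi(x))$, sidestepping balls whose radius is not a realised distance, whereas your $\sup/\inf$ dichotomy addresses this explicitly.

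The real gap is in your direct route (2)$\Rightarrow$(1). Condition (2) is quantified only over balls $B,B'$ \emph{in $X$}; even granting that the instance $B'=B$ forces $\Phi[B]$ to be a ball, the phrase ``the iff direction of (2) gives the same for $\Phi^{-1}$'' is unjustified. A ball $C$ in $Y$ is not a priori of the form $\Phi[B]$ for some ball $B$ in $X$, so nothing in (2) lets you conclude that $\Phi^{-1}[C]$ is a ball. The paper avoids this entirely by arguing cyclically (1)$\Rightarrow$(2)$\Rightarrow$(3)$\Rightarrow$(1), so that (2)$\Rightarrow$(1) is obtained only via (3), and it is the manifest symmetry of (3) in $\Phi$ and $\Phi^{-1}$ that ultimately handles the inverse. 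Your decomposition into two independent biconditionals forfeits exactly this. (Your (1)$\Rightarrow$(2) is also a bit quick: $\PP(B)$ is defined via the \emph{radius} of $B$, which a bp-bijection need not respect, so ``sibling in the inclusion tree'' requires more care; the paper instead argues directly that no ball strictly containing $\Phi[B]$ can miss $\Phi[B']$.) The simplest repair is to replace your (2)$\Rightarrow$(1) by (2)$\Rightarrow$(3), which follows immediately from your witness-ball observation, and then close the cycle as the paper does.
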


\begin{proof}
	To prove that (1) implies (2), suppose that $\Phi$ is a bp-bijection, $B' \in \PP(B)$, and $B' \neq B$ but there is a ball $D$ such that $D \supsetneq \Phi[B]$, and $D$ is disjoint from $\Phi[B']$. Then $\Phi^{-1}[D]$ is a ball, and $\Phi^{-1}[D] \supsetneq B$, so $\Phi^{-1}[D] \supseteq B'$. But this would mean that $\Phi$ is not a bijection, a contradiction.
	
	It is obvious that (2) implies (3). And to see that (3) implies (1), fix a ball $C$ in $X$, $x \in C$, and $y \in X$ such that $C=B_r(x)$, for $r=d_X(x,y)$. It is straightforward to verify that $\Phi[C]=B_s(\Phi(x))$, where $s=d_Y(\Phi(x),\Phi(y))$.
\end{proof}

In general, it is not true that every permutation $\Phi$ of balls in an ultrametric space $X$ preserving the inclusion relation, and satisfying $B' \in \PP(B)$ iff $\Phi[B'] \in \PP(\Phi[B])$, for all balls $B, B'$ in $X$, corresponds to a bp-automorphism of $X$. However, this is true for partial bp-automorphisms as the following proposition shows. In the sequel, when studying partial bp-automorphisms, we will often regard them as appropriate bijections between finite families of balls.

\begin{corollary}
	Let $X$ be an ultrametric space. There is a correspondence between partial bp-automorphisms of $X$, and bijections $p$ between finite families of balls in $X$ such that for any $B,B' \in \dom(p)$:
	
	\begin{enumerate}
		\item there is $B'' \in \dom(p)$ such that  $B'' \neq B$, and $B'' \in \PP(B)$.
		\item $B \subseteq B'$ iff $p(B) \subseteq p(B')$
		\item $B' \in \PP(B)$ iff $p(B') \in \PP(p(B))$
	\end{enumerate}
	
	Moreover, this correspondence is categorical with bp-bijections and bijections satisfying Conditions (2) and (3) as corresponding morphisms.
\end{corollary}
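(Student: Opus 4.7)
The plan is to construct both directions of the correspondence and verify that they respect composition. Throughout, I will lean on Proposition \ref{pr:PreserveP}, which already captures the combinatorial content of what a bp-bijection does to the ball structure.

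In the forward direction, given a partial bp-automorphism $\phi: A \to B$ with $A, B$ finite, define $\cc{D}_A$ to be the family of balls $B_{d(a,a')}(a)$ for distinct $a, a' \in A$, and set $p_\phi(B_{d(a,a')}(a)) := B_{d(\phi(a),\phi(a'))}(\phi(a))$. I would first verify well-definedness: if two pairs $(a_1, a_1')$ and $(a_2, a_2')$ give the same ball on the $A$-side, then all four points lie in that ball, and Proposition \ref{pr:PreserveP}(3) applied to the corresponding triples forces the two candidate images to coincide. Injectivity of $p_\phi$ is the same check applied to $\phi^{-1}$, so $p_\phi : \cc{D}_A \to \cc{D}_B$ is a bijection. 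Properties (2) and (3) for $p_\phi$ are immediate from Proposition \ref{pr:PreserveP}(2)-(3) read on the ball level, and (1) holds automatically because for $C = B_{d(a,a')}(a) \in \cc{D}_A$ the distinct sibling $B_{d(a,a')}(a')$ also lies in $\cc{D}_A$ and belongs to $\PP(C)$.

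For the reverse direction, given $p : \cc{D} \to \cc{D}'$ satisfying (1)-(3), I would select, for each inclusion-minimal $C \in \cc{D}$, a representative $a_C \in C$, and a representative $b_C \in p(C)$ on the image side, letting $A = \{a_C\}$, $B = \{b_C\}$, and $\phi(a_C) := b_C$. Conditions (2) and (3) on $p$ translate, via Proposition \ref{pr:PreserveP}(3), into the statement that $\phi$ preserves the relation $d(x,y) < d(y,z)$, so $\phi$ is a bp-bijection by the same proposition. Condition (1) is what allows the radius of $C$ to be read off combinatorially: the presence of a sibling $C'' \in \PP(C) \cap \dom(p)$ with $C'' \neq C$ pins down the radius as $\dist(C, C'')$, and $p$-preservation of $\PP$-classes (condition (3)) guarantees the same radius is assigned to $p(C)$.

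Finally, the two constructions are mutually inverse up to the canonical identification of a finite point set with its generated ball family, and functoriality follows because composition of bp-bijections preserves the defining relations pointwise, which forces the induced ball bijections to compose accordingly. The one genuinely delicate point is well-definedness of $p_\phi$ in the forward direction, where many pairs $(a,a')$ may produce the same $C \in \cc{D}_A$; I expect this to be the main obstacle, and to dispatch it cleanly by a careful application of Proposition \ref{pr:PreserveP}(3) to the triples of points involved.
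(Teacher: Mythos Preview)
Your proposal is correct and follows essentially the same approach as the paper: both construct the ball family from pairs of points in $\dom(\phi)$, invoke Proposition~\ref{pr:PreserveP} to verify conditions (2) and (3), and for the reverse direction both select representatives from the $\subseteq$-minimal balls in $\dom(p)$. Your write-up is in fact more explicit than the paper's, which does not spell out the well-definedness check for $p_\phi$ or the functoriality argument but simply asserts them.
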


\begin{proof}
	For a partial bp-automorphism $f$, let $\BB$ be the family of all balls of the form $B_r(x)$, where $x \in \dom(f)$, and $r=d(x,y)$ for some $x,y \in \dom(f)$, and let $\CC$ be defined analogously for $\rng(f)$. Then $f$ determines a bijection $p$ between $\BB$ and $\CC$. Conditions (1) and (2) are obviously satisfied by $p$, and Condition (3) follows from (2) of Proposition \ref{pr:PreserveP}. 
	
	Similarly, for a bijection $p$ between finite families of balls in $X$ satisfying (1)-(3), let $\BB$, $\CC$ be the families of balls that are $\subseteq$-minimal in $\dom(p)$, $\rng(p)$, respectively, and let $B,C \subseteq X$ be some fixed sets of representatives of $\BB$, $\CC$, respectively. Then $p$ determines a partial bp-automorphism $f:B \rightarrow C$. The `moreover' part is straightforward to verify.
\end{proof}

Fix a (partial) bp-automorphism $p$ of an ultrametric space $X$. An orbit $\OO$ of $p$ is called \emph{trivial} if it is a fixed point of $p$. It is \emph{horizontal} if $ \OO \subseteq \PP(B)$ for some $B \in \dom(p)$.  It is adjacent to an orbit $\OO'$ if there exist $O \in \OO$, $O' \in \OO'$ such that $O$ is adjacent to $O'$. 
It \emph{captures} a ball $O'$ if there is $O \in \OO$, and $\epsilon \in \{-1,1\}$ such that $O \subseteq O' \subseteq p^\epsilon[O]$ or $O' \subseteq  p^\epsilon[O] \setminus O$. And it captures an orbit $\OO'$ if there is $O' \in \OO'$ captured by $\OO$. 
We say that $p$ is \emph{complete} if $\PP(B) \subseteq \dom(p)$ whenever $B \in \dom(p)$, and $\PP(B)$ is finite. Finally, $p$ is \emph{simple} if it is complete, and there is a non-trivial $\subseteq$-monotone orbit that captures all orbits of $p$.

In the sequel, we will slightly abuse notation by writing $\BB \subseteq C$ ($\BB \subsetneq C$) also in the situation when $C$ is a subset of $X$, and $\BB$ is a family of subsets of $X$ (strictly) contained in $C$. And, for a partial bp-automorphism $p$ of $X$, $p \uhr C$ denotes the restriction of $p$ to balls contained in $C$.

The next two observations point out basic properties of various types of orbits, and relations between them.

\begin{proposition}
	\label{le:mon-or-anti}
	Let $X$ be an ultrametric space, and let $\Phi$ be a bp-automorphism of $X$. Every orbit adjacent to a $\subseteq$-monotone orbit is a $\subseteq$-antichain.
\end{proposition}

\begin{proof}
	Let $O$ be a ball whose orbit $\OO$ is adjacent to the orbit of some $O' \in \PP(O)$ such that $O' \subseteq \Phi[O']$. If $\Phi[O']=O'$, then $\OO \subseteq \PP(O)$, so $\OO$ is clearly a $\subseteq$-antichain. Otherwise, $\Phi^n[O] \in \PP(\Phi^n[O'])$, and $O \subseteq \Phi^n[O']$ for every $n>0$. Since $\Phi^n[O]$ is disjoint from $\Phi^n[O']$, it is also disjoint from $O$, for every $n>0$.
\end{proof}

\begin{proposition}
\label{pr:Capture}
Let $X$ be an ultrametric space, let $\Phi$ be a bp-automorphism of $X$, and let $\OO$ be a $\subseteq$-monotone orbit capturing a ball $O'$. Then either the orbit $\OO'$ of $O'$ is $\subseteq$-monotone or $O'$ is contained in a ball adjacent to a ball with a $\subseteq$-monotone orbit, and $\OO'$ is a $\subseteq$-antichain.
\end{proposition}

\begin{proof}
Let $O \in \OO$ witness that $\OO$ captures $O'$. Then, for some $\epsilon \in \{-1,1\}$, either $O \subseteq O' \subseteq \Phi^\epsilon[O]$ or $O' \subseteq \Phi^\epsilon[O] \setminus O$. In the latter case, as $O$ and $O'$ are disjoint, there are unique adjacent balls $B$, $B'$ such that $O \subseteq B$, and $O' \subseteq B'$. Clearly, the orbit of $B$ is $\subseteq$-monotone, and, by Proposition \ref{le:mon-or-anti}, the orbit of $B'$ is a $\subseteq$-antichain. Then $\OO'$ is also a $\subseteq$-antichain.
\end{proof}

\begin{proposition}
	\label{pr:JEP}
	For every $N \in \NN \cup \{\NN\}$, the class of partial bp-automorphisms of $\Ury^\prec_N$ has strong JEP.
\end{proposition}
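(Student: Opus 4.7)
The plan is to merge $S_1,S_2$ into a single partial bp-automorphism $U$ by placing disjoint copies of their underlying sets at one sufficiently large rational distance, and then embedding the combined finite ordered ultrametric structure into $\Ury^\prec_N$ via the \fra property of $\KK^\prec_N$.

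Concretely, I would fix disjoint isomorphic copies $(Z_i',d_i',\prec_i',S_i')$ of $(Z_i,d,\prec,S_i)$ with $Z_i=\dom(S_i)\cup\rng(S_i)$, pick a rational $r_0$ strictly larger than every distance occurring in $Z_1\cup Z_2$, and equip $W=Z_1'\sqcup Z_2'$ with the metric that agrees with $d_i'$ on each $Z_i'$ and assigns distance $r_0$ to every pair of points lying in different copies, ordered so that every point of $Z_1'$ precedes every point of $Z_2'$. The ultrametric inequality and the convexity of $\prec$ follow immediately because any ``mixed'' triple has two sides equal to $r_0$, and the order is induced by placing a single new enclosing ball of radius $r_0$ above the ball trees of $Z_1',Z_2'$. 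For the polygon bound, every $r$-polygon with $r<r_0$ lies inside one $Z_i'$ and so has size at most $N$, while every $r_0$-polygon contains at most one point per copy, since within a copy no two points are at distance exactly $r_0$; hence $W\in\KK^\prec_N$ whenever $N\geq 2$ (the case $N=1$ being vacuous). The union $T=S_1'\cup S_2'$ is then a partial bp-automorphism of $W$: by Proposition \ref{pr:PreserveP}(3) it suffices to check the strict-inequality criterion on triples, and on mixed triples both sides of $d(x,y)<d(y,z)$ compare an intra-copy distance ($<r_0$) with the value $r_0$ and so agree in truth value, while intra-copy triples are handled by $S_1'$ and $S_2'$ individually.

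Finally, the \fra property of $\KK^\prec_N$ produces an embedding $\iota:W\to\Ury^\prec_N$, and by injectivity $\iota[Z_1']\cap\iota[Z_2']=\emptyset$. Setting $U=\iota\circ T\circ\iota^{-1}$ and defining $\phi_i:Z_i\to\Ury^\prec_N$ as the composition of the identification $Z_i\cong Z_i'$ with $\iota\uhr Z_i'$, each $\phi_i$ is an order-preserving bp-bijection between finite substructures of $\Ury^\prec_N$, hence extends to a bp-automorphism of $\Ury^\prec_N$ by the ultrahomogeneity noted right after the definition of $\KK^\prec_N$; in particular $\phi_i$ is $\BP(\Ury^\prec_N)$-extendable. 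One has $\phi_i\circ S_i=U\circ\phi_i$ by construction, and $\phi_1[Z_1]$, $\phi_2[Z_2]$ are disjoint, which yields the two disjoint embeddings required for strong JEP. The only delicate point is the polygon-size bound: choosing $r_0$ strictly larger than every pre-existing distance (rather than merely the diameters of $Z_1,Z_2$) is precisely what prevents the creation of an $r_0$-polygon of size larger than $2$ when the two copies are glued.
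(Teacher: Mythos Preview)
Your proof is correct and follows essentially the same approach as the paper: the paper's argument is simply to ``regard $\zdef(q_0)$, $\zdef(q_1)$ as contained in disjoint balls $B_0$, $B_1$'' and take the union, which is exactly what your choice of a large rational $r_0$ accomplishes. Your write-up just makes explicit the verifications (ultrametric inequality, convexity of the order, the polygon bound, and $G$-extendability of the embeddings) that the paper leaves to the reader; in particular, your remark that $r_0$ must exceed every existing distance to keep the new $r_0$-polygon of size at most $2$ is a detail the paper does not spell out.
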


\begin{proof}
	Fix $N \in \NN \cup \{\NN\}$, and partial bp-automorphisms $q_0,q_1$ of $\Ury^\prec_N$. We can regard $\zdef(q_0)$, $\zdef(q_1)$ as contained in disjoint balls $B_0$, $B_1$. Then, clearly, the union $q_0 \cup q_1$ gives a joint embedding of $q_0$ and $q_1$.% for both classes.
\end{proof}

\begin{lemma}
	\label{le:Disjoint}
	Let $N \in \NN \cup \{\NN\}$. Let $B,C$ be disjoint balls in $\Ury^\prec_N$, let $p$ be a complete partial bp-automorphism of $\Ury^\prec_N$ with $\dom(p) \subsetneq B$, $\rng(p) \subsetneq C$, and let $q_0,q_1$ be extensions of $p$ such that $\dom(q_0), \dom(q_1) \subsetneq B$, $\rng(q_0), \rng(q_1) \subsetneq C$. Then there exist partial bp-automorphisms $q'_0$, $q'_1$ such that
	\begin{enumerate}
		\item $\dom(q'_0), \dom(q'_1) \subsetneq B$, $\rng(q'_0), \rng(q'_1) \subsetneq C$,
		%\item $\dom(q'_0) \cap \dom(q'_1)=\dom(p)$, $\rng(q'_0) \cap \rng(q'_1)=\rng(p)$,
		\item $q'_0 \cup q'_1$ amalgamates $q_0$ and $q_1$ over $p$.
		
	\end{enumerate}
\end{lemma}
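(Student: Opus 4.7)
My plan is to construct a ``free'' amalgam $\hat q$ of $q_0$ and $q_1$ over $p$ within a finite structure from $\KK^\prec_N$, and then embed this amalgam into $\Ury^\prec_N$ so that its domain sits strictly inside $B$ and its range strictly inside $C$. The output $q'_0 \cup q'_1$ will be the realized amalgam, with $q'_0, q'_1$ the embedded copies of $q_0, q_1$, which therefore extend $q_0,q_1$ and satisfy the desired ball-containment.

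The disjointness of $B$ and $C$ is the key simplifying feature: in an ultrametric space, every cross-distance between a point of $B$ and a point of $C$ equals $\dist(B,C)$, so the amalgamation problem decomposes into a domain-side amalgamation of $\dom(q_0), \dom(q_1)$ over $\dom(p)$ inside $B$, and a range-side amalgamation of $\rng(q_0), \rng(q_1)$ over $\rng(p)$ inside $C$, the two sides being linked only through the bp-condition. Each individual amalgamation is provided by the amalgamation property of $\KK^\prec_N$. To ensure that the induced bijection $\hat q$ remains a bp-bijection (Proposition \ref{pr:PreserveP}(3)), I choose the two amalgams in tandem so that, for every pair $(x,y)$ of new points with $x \in \dom(q_0)\setminus\dom(p)$ and $y\in\dom(q_1)\setminus\dom(p)$, the relative ball-position of $y$ with respect to $\dom(q_0)$ inside $B$ mirrors the relative ball-position of $q_1(y)$ with respect to $\rng(q_0)$ inside $C$ under the ball-bijection induced by $q_0$ on the common ball-tree of $\dom(p)$ and $\rng(p)$.

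Realizing the resulting finite amalgam inside $\Ury^\prec_N$, with domain in $B$ and range in $C$ and pointwise fixing $\dom(p) \cup \rng(p)$, is possible by ultrahomogeneity of $\Ury^\prec_N$ together with the fact that $B$ and $C$ are open balls of $\Ury^\prec_N$ relatively universal for their $\KK^\prec_N$-extensions. The strict inclusions $\dom(q'_i)\subsetneq B$ and $\rng(q'_i)\subsetneq C$ follow since only finitely many new points are added to infinite open balls. The main obstacle is the tandem amalgamation respecting the $N$-polygon bound for finite $N$: a naive ``free'' choice of inter-distances (setting them equal to the ultrametric upper bound) could create an $r$-polygon of size exceeding $N$, but strictly smaller inter-distances, chosen consistently on both sides, avoid this, and the hypothesis that $\dom(q_i),\rng(q_i)$ are strictly contained in $B, C$ provides the slack needed to realize such a choice inside $\Ury^\prec_N$.
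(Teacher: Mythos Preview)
Your approach differs substantially from the paper's, and the ``tandem'' step contains a genuine gap.

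The paper does not invoke the amalgamation property of $\KK^\prec_N$ at all. It proceeds by well-founded induction on the finite family of balls $\dom(p)$. The base case is strong JEP (Proposition~\ref{pr:JEP}). In the key inductive step where $\dom(p)\subseteq\PP(B_0)$ for some $B_0\subsetneq B$, the paper exploits the strict containments to choose intermediate balls $B_0\subsetneq B_1\subsetneq B$ and $C_0\subsetneq C_1\subsetneq C$, and then places a bp-copy of $q_0$ entirely inside $B_1$ (resp.\ $C_1$) while the bp-copy of $q_1$ is arranged so that each of its balls either lies in $\PP(B_0)$, contains $B_1$, or is disjoint from $B_1$. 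This spatial separation makes $q'_0\cup q'_1$ automatically a partial bp-automorphism; no compatibility argument between two independent amalgams is needed. The remaining cases reduce to this one by splitting $\dom(p)$ and applying the induction hypothesis to strictly smaller families.

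The difficulty with your plan is that $p,q_0,q_1$ are bp-bijections and need \emph{not} preserve the convex order $\prec$, while the morphisms of $\KK^\prec_N$ (and hence the embeddings produced by its AP) are order-preserving. Consequently, if you amalgamate the domains in $\KK^\prec_N$ to get $D$ and, independently, the ranges to get $R$, there is no reason the induced bijection $\hat q\colon D\to R$ is ball-preserving: the ball-tree of $D$ is determined by the \emph{ordered} amalgamation, and transporting it along $q_0,q_1$ need not reproduce the ball-tree of $R$. Your sentence about ``mirroring relative ball-positions under the ball-bijection induced by $q_0$'' names this issue but does not resolve it; you do not explain why the position you assign to $y\in\dom(q_1)\setminus\dom(p)$ relative to $\dom(q_0)$ is consistent with the already-fixed positions of $y$ relative to the other points of $\dom(q_1)$, nor why the resulting structure carries a convex order extending both given orders. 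A corrected version would first amalgamate at the level of unordered ball-trees (where transport along $q_i$ is an isomorphism) and only afterwards choose convex orders on each side by amalgamating linear orders at every sibling set; that works, but it is exactly the content your sketch omits. The remark about avoiding oversized polygons via ``strictly smaller inter-distances'' is likewise not justified in the presence of the convexity constraint on $\prec$.
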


\begin{proof}
	We prove the lemma by induction on the well-founded ordering $\subseteq$ of possible $\dom(p)$, i.e., of the family of all finite families of (ordered) balls.
	
	% the sum of the domains of $p$, $q_0\setminus p$ and $q_1 \setminus p$.
	
	Suppose that $\dom(p) \subseteq \PP(B_0)$ for some ball $B_0 \subsetneq B$, and so $\rng(p) \subseteq \PP(C_0)$ for some ball $C_0 \subsetneq C$. Fix balls $B_1,C_1$ with $B_0 \subsetneq B_1 \subsetneq B$, $C_0 \subsetneq C_1 \subsetneq C$. Also, fix copies of families $\dom(q_0)$, $\dom(q_1)$, which contain $\dom(p)$, and are such that
	\begin{enumerate}
	\item in the copy of $\dom(q_0)$, every element is contained in $B_1$, while in the copy of $\dom(q_1)$, every element is contained in a ball from $\PP(B_0)$ or it contains $B_1$ or it is disjoint from $B_1$,
	\item $q_0 \upharpoonright B'=\emptyset$ or $q_1 \upharpoonright B'=\emptyset$ or $p \upharpoonright B' \neq \emptyset$ for any $B' \in \PP(B_0)$.
	\end{enumerate}
	Note that Condition (2) can be satisfied because $p$ is complete, so either $p$ is defined on $\PP(B_0)$ or $N=\NN$, and then the ordering $\prec$ is dense on $\PP(B_0)$. Do the same for $\rng(q_0)$, $\rng(q_1)$, $C_0$ and $C_1$, and let $q'_0$, $q'_1$ be copies of $q_0$, $q_1$, respectively, whose domains and ranges are the corresponding copies of the domains and ranges of $q_0$, $q_1$. Moreover, by strong JEP, we can assume that the restrictions of $q'_0$, $q'_1$ to elements contained in some ball from $\PP(B_0)$ are such that their union is a partial bp-automorphism. It is straightforward to verify that then $q'_0 \cup q'_1$ is a required amalgam.

	Suppose now that $\dom(p)$ is not contained in any family $\PP(B_0)$, and that the lemma is true for all strict restrictions of $p$. Let us consider two cases:
	
	Case 1: there exist $B_0, B_1 \in \dom(p)$ such that $B_0 \subsetneq B_1$. Then we have two subcases to consider. The first one is that every element of $(\dom(q_0) \cup \dom(q_1)) \setminus \dom(p)$ either contains $B_1$ or is disjoint from $B_1$. Then we can remove $B_0$ from $\dom(p)$, and use the inductive assumption. Otherwise, there exists a ball in $(\dom(q_0) \cup \dom(q_1)) \setminus \dom(p)$ that is contained in $B_1$. But then we can separately consider the restrictions $q_0 \upharpoonright B_1$, $q_1 \upharpoonright B_1$ and $p \upharpoonright B_1$, and the restrictions of $q_0$, $q_1$ and $p$ to the remaining balls, also using the inductive assumption.      
	
	Case 2: there exists a ball $B_0 \subsetneq B$ such that every element of $\dom(p)$ is contained in a ball from $\PP(B_0)$, and at least two adjacent balls in $\PP(B_0)$ contain an element from $\dom(p)$. Then we can first apply the inductive assumption to the restrictions $q_0 \upharpoonright B_1, q_1 \upharpoonright B_1$ and $p \upharpoonright B_1$, $B_1 \in \PP(B_0)$, and then proceed as in the base case $\dom(p) \subseteq \PP(B_0)$.
	
\end{proof}

Observe that exactly the same proof works if $p$ consists only of trivial orbits (and the balls $B,C$ are equal.) Thus, we have

\begin{lemma}
	\label{le:Trivial}
	Let $N \in \NN \cup \{\NN\}$, and let $q_0, q_1$ be extensions of a complete partial bp-automorphism $p$ of $\Ury^\prec_N$ consisting only of trivial orbits. Then there exists an amalgam of $q_0$, $q_1$ over $p$.
\end{lemma}

\begin{corollary}
	\label{co:Disjoint}
	Let $N \in \NN \cup \{\NN\}$. Let $p$ be a complete partial bp-automorphism of $\Ury^\prec_N$ with an $\subseteq$-antichain orbit $\OO=\{O_0, \ldots, O_n\}$ such that
	
	\begin{enumerate}
		\item $\zdef(p) \subseteq \bigcup \OO$,
		\item $\rng(p) \cap O_i= \dom(p) \cap O_i$ for $0<i<n$.
	\end{enumerate}
	
	Let $q_0$, $q_1$ be extensions of $p$ satisfying Conditions (1) and (2) (with $q_0$, $q_1$ substituted for $p$). Then there exists a partial bp-automorphism $r$ such that $\zdef(r) \subseteq \bigcup \OO$, and $r$ amalgamates $q_0$, $q_1$ over $p$.
	
	%\begin{enumerate}
	%\item ,
	%\item $r=q'_0 \cup q'_1$ is an amalgam of $q_0$ and $q_1$ over $p$, where all involved embeddings are the identity on $\zdef(p)$. 
	%\end{enumerate}
\end{corollary}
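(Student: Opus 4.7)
The plan is to reduce the problem to a single application of Lemma~\ref{le:Disjoint} between the first two orbit balls $O_0$ and $O_1$, extend the resulting inner amalgam to a global bp-automorphism $\Phi$ of $\Ury^\prec_N$, and then read off $r$ as the appropriate restriction of $\Phi$ together with the orbit-level data.

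Let $p^*, q_0^*, q_1^*$ denote the restrictions of $p, q_0, q_1$, respectively, to balls strictly contained in $O_0$. By Proposition~\ref{pr:PreserveP} these have domains $\subsetneq O_0$ and ranges $\subsetneq O_1$, so Lemma~\ref{le:Disjoint} applies with $B = O_0$, $C = O_1$ and yields a partial bp-bijection $\rho$ with $\dom(\rho) \subsetneq O_0$, $\rng(\rho) \subsetneq O_1$, amalgamating $q_0^*, q_1^*$ over $p^*$. Using ultrahomogeneity of $\Ury^\prec_N$ I realize $\rho$ as a strong amalgam, so that $\rho \supseteq q_0^* \cup q_1^*$. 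A short case analysis (the two pieces agree on $\dom(p^*)$, and cross-interactions reduce to the relation $B' \subsetneq O_0 \mapsto \rho(B') \subsetneq O_1 = p(O_0)$) shows that $p \cup \rho$ is a partial bp-automorphism of $\Ury^\prec_N$; since $\Ury^\prec_N$ is the \fra limit of $\KK^\prec_N$, it extends to a bp-automorphism $\Phi$ of $\Ury^\prec_N$ with $\Phi(O_i) = O_{i+1}$ for $i < n$.

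I then define
\[ r \;=\; p \,\cup\, q_0 \,\cup\, q_1 \,\cup\, \Phi \uhr \bigcup_{i=0}^{n-1} \Phi^i[\dom(\rho)]. \]
By construction $\zdef(r) \subseteq \bigcup \OO$, and $r$ is a partial bp-bijection since all its pieces are restrictions of the global automorphism $\Phi$, agreeing on overlaps. Condition~(2) for $r$ at every middle $O_i$ holds automatically, because $\Phi$ is an automorphism and hence both $\dom(r) \cap O_i$ and $\rng(r) \cap O_i$ equal $\Phi^i[\dom(\rho)] \cup \{O_i\}$. Finally $r$ amalgamates $q_0, q_1$ over $p$ inside $\bigcup \OO$ because $\rho$ amalgamates $q_0^*, q_1^*$ over $p^*$ on the inner level of $O_0 \to O_1$ and everything else is forced by the orbit action of $\Phi$.

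The main obstacle is verifying that $\Phi$ agrees with each $q_j$ on inner balls of every $O_i$, not only $O_0$, since $\Phi$ was built to extend $p \cup \rho$ and $\rho$ encodes the behavior of $q_j$ only inside $O_0$. This is established by induction on $i$: by repeated application of hypothesis~(2) on $q_j$, any inner $B \in \dom(q_j) \cap O_i$ has the form $B = q_j^i(A_0) = \Phi^i(A_0)$ for some $A_0 \in \dom(q_j^*)$, and then $\Phi(B) = \Phi^{i+1}(A_0) = q_j^{i+1}(A_0) = q_j(B)$, using that $\Phi$ agrees with $q_j$ on $A_0$ via $\rho$. Once this agreement is in place, the propagation of the inner amalgam along the orbit and the matching condition at middle balls become automatic consequences of $\Phi$ being a bp-automorphism.
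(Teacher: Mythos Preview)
Your inductive step is circular. You build $\Phi$ as an extension of $p \cup \rho$, where $\rho$ only records the inner action between $O_0$ and $O_1$. To pass from $i$ to $i+1$ you need $\Phi(B') = q_j(B')$ for $B' = q_j^i(A_0) \subsetneq O_i$ with $i \geq 1$, but nothing in the construction of $\Phi$ guarantees this: if $B' \in \dom(q_j) \setminus \dom(p)$, then $B'$ lies neither in $\dom(\rho)$ (which is contained in $O_0$) nor in $\dom(p)$, so $\Phi(B')$ is completely uncontrolled. Your justification ``$\Phi$ agrees with $q_j$ on $A_0$ via $\rho$'' only yields the case $i=0$; the equality $\Phi^{i+1}(A_0) = q_j^{i+1}(A_0)$ is exactly what you are trying to prove. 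There is a second, earlier problem: you cannot in general arrange $\rho \supseteq q_0^* \cup q_1^*$, since $q_0^*$ and $q_1^*$ may disagree on a ball in $\dom(q_0^*) \cap \dom(q_1^*) \setminus \dom(p^*)$; amalgamation only provides embeddings, not literal containment, so the formula $r = p \cup q_0 \cup q_1 \cup \Phi \uhr (\cdots)$ need not define a function.

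The paper's argument avoids both issues by applying Lemma~\ref{le:Disjoint} at \emph{each} step $O_i \to O_{i+1}$ separately, obtaining amalgams $r_i$ for $i < n$, and then arranging inductively that the range side of $r_i$ inside $O_{i+1}$ coincides with the domain side of $r_{i+1}$, so that $r = \bigcup_i r_i$ is the desired amalgam. Your idea of propagating a single amalgam along the orbit via a global $\Phi$ can be made to work, but only if you abandon the requirement that $r$ literally extend $q_0$ and $q_1$ and instead \emph{define} the amalgamation embeddings $\psi_j : q_j \to r$ by transporting along $\Phi$, setting $\psi_j(B) = \Phi^i(\psi_j'(A_0))$ whenever $B = q_j^i(A_0)$, where $\psi_j'$ is the embedding of $q_j^*$ into $\rho$ coming from Lemma~\ref{le:Disjoint}; checking that this is well defined and is an embedding of partial bp-automorphisms is the missing work.
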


\begin{proof}
	We apply Lemma \ref{le:Disjoint} to $p \upharpoonright O_i$, $q_0 \upharpoonright O_i$, $q_1 \upharpoonright O_i$, $i<n$ to obtain amalgams $r_i$, $i<n$. It is not hard to see that we can assume that actually $\rng(r_i) \cap O_i= \dom(r_{i+1}) \cap O_i$ for every $0<i<n$, and therefore $r=\bigcup_i r_i$ is the required amalgam of $q_0$, $q_1$ over $p$.
\end{proof}

\begin{corollary}
	\label{co:Simple}
	Let $N \in \NN \cup \{\NN\}$, and let $q_0, q_1$ be simple extensions of a simple partial bp-automorphism $p$ of $\Ury^\prec_N$. Then there exists a simple amalgam of $q_0$, $q_1$ over $p$.
\end{corollary}

\begin{proof}
	Let $\OO$ be the unique $\subseteq$-monotone orbit of $p$ capturing all orbits of $p$, and let $q'_0$, $q'_1$, $p'$ be the restrictions of $q_0$, $q_1$, $p$, respectively, to balls with $\subseteq$-monotone orbits. Observe that strongly amalgamating $q'_0$, $q'_1$ over $p'$ is equivalent to strongly amalgamating corresponding partial automorphisms of finite linear orderings induced by the inclusion relation, and so there exists such a strong amalgam $r'$ with only $\subseteq$-monotone orbits.
	
	Clearly, we can assume that $r'$ extends $p'$. Observe that we can also assume that
	\begin{enumerate}
	\item every ball in $\zdef(q_0)$ or $\zdef(q_1)$ is contained in a ball $B' \in \PP(B)$ for some $B \in \zdef(r')$, and
	\item  $q_0 \upharpoonright B'=\emptyset$ or $q_1 \upharpoonright B'=\emptyset$ or $p \upharpoonright B' \neq \emptyset$ for any such $B'$.
	\end{enumerate}
  	The latter can be satisfied because $p$ is complete and, in the case that $N=\NN$, the ordering $\prec$ is dense on any $\PP(B')$. Therefore, by strong amalgamation of finite linear orderings, Lemma \ref{le:mon-or-anti}, and Corollary \ref{co:Disjoint}, for any $B \in \dom(r')$, we can find an amalgam $r_B$ of $q_0 \upharpoonright \PP(B)$, $q_1 \upharpoonright \PP(B)$ over $p \upharpoonright \PP(B)$ so that the union $\bigcup_{B \in \dom(r')} r_B$ is simple, and amalgamates $q_0$, $q_1$ over $p$.
\end{proof}

\begin{theorem}
	\label{th:1-WAP}
	For every $N \in \NN \cup \{\NN\}$, the family of partial bp-automorphisms of $\Ury^\prec_N$ has CAP.
\end{theorem}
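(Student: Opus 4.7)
The plan is to exhibit a cofinal subclass of partial bp-automorphisms of $\Ury^\prec_N$ on which amalgamation always succeeds; the natural candidate is the class of \emph{simple} partial bp-automorphisms, since amalgamation within this subclass is already supplied by Lemma~\ref{le:Simple}. Thus the whole task reduces to proving that the simple partial bp-automorphisms are cofinal: every $p$ extends to a simple $p'$.

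Given $p$, I would first fix a ball $C$ in $\Ury^\prec_N$ properly containing every ball in $\dom(p)\cup \rng(p)$; such a $C$ exists because $\zdef(p)$ is finite and $\Ury^\prec_N$ is $\aut(\Ury^\prec_N)$-homogeneous. Next I adjoin to $p$ a single long $\subsetneq$-increasing chain
\[ O_0 \subsetneq O_1 \subsetneq \cdots \subsetneq O_m \]
of balls in $\Ury^\prec_N$ with $O_0$ chosen strictly below $C$ in the inclusion order, with $O_m$ strictly above $C$, and with the chain fine enough (i.e.\ sufficiently many intermediate balls) that every ball of $\zdef(p)$ is strictly $\subseteq$-between some consecutive pair $O_i \subsetneq O_{i+1}$. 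Setting $p'(O_i) = O_{i+1}$ for $i<m$ makes $\OO = \{O_0,\dots,O_m\}$ a monotone orbit of $p'$, and by construction every ball $B \in \dom(p)$ lies in $\PP(B')$ for some $B'$ sitting in the inclusion-interval $[O_i,O_{i+1}]$ for some $i$. For each such required witness $B'$ not already present, I would augment $p'$ with either a fixed ball or a short antichain orbit of diameter matching $B'$, small enough to be encompassed by $\OO$.

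Verifying that $p'$ is a simple partial bp-automorphism then splits into three checks: (i) $p'$ is a partial bp-automorphism, by Proposition~\ref{pr:PreserveP} together with ultrahomogeneity of $\Ury^\prec_N$, which lets us realise the prescribed inclusion and adjacency pattern on balls by actual points; (ii) $\OO$ is encompassing, since by Proposition~\ref{le:mon-or-anti} every non-$\OO$ orbit of $p'$ is either an antichain (harmless for the intertwining condition) or a monotone orbit that, by our choice of step sizes, only intertwines with $\OO$ by being encompassed by it; (iii) every ball of $\dom(p')$ is adjacent to an encompassed ball, which holds by the previous paragraph.

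The step I expect to be the main obstacle is the uniqueness half of simplicity: ruling out the existence of a second monotone encompassing orbit of $p'$ distinct from $\OO$. This will be ensured by choosing $\OO$ long and with step ratios in the inclusion order that avoid those appearing in the finitely many existing monotone orbits of $p$, so that no $p$-orbit can be spliced with a fragment of $\OO$ (or lie entirely inside a single interval $[O_i,O_{i+1}]$ in a way that is encompassing rather than encompassed). Cofinality is then established, and together with Lemma~\ref{le:Simple} it yields CAP for the class of partial bp-automorphisms of $\Ury^\prec_N$.
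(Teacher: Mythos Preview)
Your plan hinges on the claim that the simple partial bp-automorphisms are cofinal, but this is false. Consider a partial bp-automorphism $p$ possessing an $\subseteq$-antichain orbit $A_0\to A_1\to A_2$ (three pairwise disjoint balls of the same radius) together with a further orbit $C_0\to C_1\to C_2$ where each $C_i\subsetneq A_i$ has strictly smaller radius. In any extension $p'$ with a $\subseteq$-monotone encompassing orbit $\OO$, the chain $\OO$ is linearly ordered by inclusion, so all of its sufficiently small members (those of radius at most that of $C_0$) lie inside a single one of the $A_i$, or in none of them. Consequently every ball $B'$ encompassed by $\OO$ with the radius of $C_0$ also lies in that single region, and hence at most one of $C_0,C_1,C_2$ can belong to $\PP(B')$ for such a $B'$; the other two are too far away, since their distance to $B'$ is at least the radius of the $A_i$. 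Thus $p'$ can never satisfy the second clause in the definition of ``simple'', and your proposed cofinal class is not cofinal.

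This is exactly why the paper does not attempt to reduce to Lemma~\ref{le:Simple} alone. Its argument keeps the original $p$ (after a mild normalisation) and \emph{decomposes the amalgamation problem} rather than the object $p$: it first amalgamates the ``simple skeleton'' (balls adjacent to encompassed ones) using Lemma~\ref{le:Simple}, then separately amalgamates the content sitting inside each $\subseteq$-antichain orbit using Lemma~\ref{le:Disjoint} and Corollary~\ref{co:Disjoint}, and finally handles the residual balls and the case of several encompassing orbits by an induction that peels them off one at a time. The antichain pieces are precisely what your single-chain construction cannot absorb, and the paper's extra lemmas (\ref{le:Disjoint}, \ref{co:Disjoint}) are there specifically to deal with them. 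To repair your approach you would need an analogue of those lemmas; at that point you are essentially reproducing the paper's proof.
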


\begin{proof}
	Fix $N \in \NN \cup \{\NN\}$, and a partial bp-automorphism $p$ of $\Ury^\prec_N$. By possibly extending it, we can assume that $p$ is complete, and the following condition holds: if, in some extension of $p$, an extension of an orbit of $p$ is captured by an orbit of $p$, then it is already captured by this orbit in $p$. Let $q_0$, $q_1$ be extensions of $p$. By possibly extending $q_0$, $q_1$, we can assume that the above condition holds for these mappings as well. 
	
	We proceed by induction on the number of non-trivial orbits of $p$. The base case follows from Lemma \ref{le:Trivial}. Suppose that there is a non-trivial $\subseteq$-monotone orbit $\OO$ in $p$. Denote by $q'_0$, $q'_1$, $p'$ the restrictions of $q_0$, $q_1$, $p$, respectively, to balls whose orbits are captured by $\OO$. As $q'_0$, $q'_1$, $p'$ are simple, by Corollary \ref{co:Simple}, there a exists a simple $r'$ amalgamating $q'_0$, $q'_1$ over $p'$. We can assume there is a $\subseteq$-monotone orbit $\OO'$ in $r'$ such that any ball in $\zdef(q'_0)$ or $\zdef(q'_1)$ is captured by $\OO'$.
	
	Next, let $q''_0$, $q''_1$, $p''$ be the restrictions of $q_0$, $q_1$, $p$ respectively, to the remaining balls. Let $C \subseteq D$ be the smallest, and the largest balls in $\OO'$. Observe that, by the condition initially imposed on $q_0$, $q_1$, $p$, we have that $B \subseteq C$ or $B \supseteq D$ or $B$ is disjoint from $D$, for any $B$ in $\zdef(q''_0)$ or $\zdef(q''_1)$ or $\zdef(p'')$. Thus, without loss of generality, we can assume that there exist $C' \subsetneq C$ and $D' \supsetneq D$ such that $p(C')=C'$, $p(D')=D'$, and $B \subseteq C'$ or $B \supseteq D'$ or $B$ is disjoint from $D'$, for $B$ as above. As there are strictly less non-trivial orbits in $p''$ than in $p$, we can find $r''$ amalgamating $q''_0$, $q''_1$ over $p''$ so that $B \subseteq C'$ or $B \supseteq D'$ or $B$ is disjoint from $D'$, for any $B \in \zdef(r'')$. It is straightforward to verify that $r' \cup r''$ amalgamates $q_0$, $q_1$ over $p$.
	
	Suppose now that there is a ball $O \in \dom(p)$ with a non-trivial horizontal orbit (this is possible only if $N=\NN$.) Denote by $q'_0$, $q'_1$, $p'$ the restrictions of $q_0$, $q_1$, $p$, respectively, to balls contained in $\bigcup \PP(O)$. Then we can find an amalgam $r'$ of $q'_0$, $q'_1$ over $p'$ such that $\zdef(r') \subseteq \bigcup \PP(O)$, using strong amalgamation of linear orderings, and Corollary \ref{co:Disjoint}. Let $q''_0$, $q''_1$, $p''$ be the restrictions of $q_0$, $q_1$, $p$ respectively, to balls not contained in $\bigcup \PP(O)$. Again, by inductive assumption, we can amalgamate $q''_0$, $q''_1$ over $p''$ so that any $B \in \zdef(r'')$ is either disjoint from $\bigcup \PP(O)$ or contains it. Then $r' \cup r''$ amalgamates $q_0$, $q_1$ over $p$.
	
\end{proof}

Below, for a word $v$ in the free group $F_2$ on two generators $s$, $t$, and partial bp-automorphisms $p$, $q$ of an ultrametric space $X$, we denote by $v(p,q)$ the partial bp-automorphism of $X$ obtained by substituting $p$ for $s$ and $q$ for $t$ in the word $v$, and performing the composition operation whenever it is possible.
 
\begin{theorem}
	\label{th:2-noWAP}
	For every $N \in \NN \cup \{\NN\}$, the class of pairs of partial bp-automorphisms of $\Ury^\prec_N$ does not have WAP. 
\end{theorem}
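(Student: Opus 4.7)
My plan is to exhibit a specific pair $(p_0, q_0) \in \KK_{G,2}$, with $G = \BP(\Ury^\prec_N)$, that witnesses the failure of WAP. A natural candidate is a minimal shift: fix distinct points $a, b \in \Ury^\prec_N$ and set $p_0 = \{(a,b)\}$, $q_0 = \emptyset$. The points in $\zdef(p_0) \cup \zdef(q_0) = \{a, b\}$ will be \emph{committed} in the sense that any two embeddings $\phi_1, \phi_2$ into an amalgam that agree on $(p_0, q_0)$ must send them to the same images $a^*, b^*$.

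Given an arbitrary extension $(p_0', q_0') \supseteq (p_0, q_0)$, I would exploit the finiteness of $\zdef(p_0') \cup \zdef(q_0')$ together with the convex ordering to locate a \emph{free slot}: a point $y$ in the partial orbit of $a$ and an operator $o \in \{p^{\pm 1}, q^{\pm 1}\}$ such that $o(y)$ is undefined in $(p_0', q_0')$. Since $p_0(a) = b \neq a$ and any extension of $p_0$ inherits the shift, the relevant orbit cannot be closed under all four operators, so such a slot always exists. I would then build a word $v \in F_2$ whose evaluation $v(p_0', q_0')(a)$ is defined along every step preceding the free slot, with the free application of $o$ coming last (possibly followed by a controlled tail through fresh points).

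Using strong JEP (Proposition \ref{pr:JEP}) to append fresh points, I would construct two extensions $(p_1, q_1), (p_2, q_2) \supseteq (p_0', q_0')$ that assign the free action differently, with the aim that $v(p_1, q_1)(a) = x_1 \in \{a, b\}$ is committed while $v(p_2, q_2)(a) = x_2 \neq x_1$. Suppose toward contradiction that an amalgam $(r, s)$ with embeddings $\phi_1, \phi_2$ agreeing on $(p_0, q_0)$ exists. Since both $a$ and $x_1$ are committed, $\phi_1(x_1) = \phi_2(x_1) =: x_1^*$ and $\phi_1(a) = \phi_2(a) = a^*$; the embedding conditions $\phi_i \circ p_i = r \circ \phi_i$ and $\phi_i \circ q_i = s \circ \phi_i$ then give
\[
x_1^* = \phi_1(v(p_1, q_1)(a)) = v(r, s)(a^*) = \phi_2(v(p_2, q_2)(a)) = \phi_2(x_2).
\]
Hence $\phi_2(x_2) = \phi_2(x_1)$, which by injectivity of $\phi_2$ forces $x_2 = x_1$, contradicting our choice.

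The main obstacle is the construction in the previous paragraph: the location of the free slot and the committed targets available depend on the shape of $(p_0', q_0')$, and the injectivity constraint on partial bp-bijections restricts which assignments are simultaneously allowed. Handling this uniformly for every $(p_0', q_0')$ requires choosing $v$ adaptively, possibly appending several auxiliary fresh points to route the word back to a committed target in one extension while diverting it to a fresh point in the other. The universality of $\Ury^\prec_N$ together with Lemma \ref{le:mon-or-anti} on the orbit structure of bp-automorphisms should be the key tools to guarantee that the resulting $(p_1, q_1)$ and $(p_2, q_2)$ remain valid partial bp-automorphisms of $\Ury^\prec_N$.
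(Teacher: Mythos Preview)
Your central mechanism—forcing a word $v$ to return to a \emph{committed} point $x_1 \in \{a,b\}$ in one extension and to some other value $x_2$ in the other—has a genuine gap, and the gap is exactly the ``main obstacle'' you flag but do not resolve. Nothing in your argument uses the convex ordering in the contradiction; the order is mentioned only to help locate a free slot. If the strategy worked as written it would show that $\KK_{G,2}$ fails WAP for \emph{every} closed $G\leq\Sym(M)$, which is false (for instance, the automorphism group of the random graph has ample generics). Concretely, the adversary can choose $(p_0',q_0')$ so that $a$ and $b$ lie in $\dom(p_0')\cap\rng(p_0')\cap\dom(q_0')\cap\rng(q_0')$. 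Then no extension of $p_0'$ or $q_0'$ can introduce a \emph{new} arrow into or out of $a$ or $b$, and the only way a word in $(p_1,q_1)$ can hit $\{a,b\}$ is along an arrow already present in $(p_0',q_0')$—hence also present in $(p_2,q_2)$, so the two extensions cannot disagree at that step. Routing through fresh points does not help: the last arrow into $a$ or $b$ would still have to be new.

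The paper's proof avoids this by using the order, not equality, as the rigid invariant. One shows that from any $(p',q')$ extending a fixed $(p,q)$ there is a word $w$ (built from iterates of an extension of $p'$, and in the inductive step also of $q'$) such that the ball $D_0=w(C_0)$ is committed—any two embeddings agreeing on $(p,q)$ send $D_0$ to the same ball—while $D_0$ lies strictly outside the ball containing $\zdef(q')$ at the relevant scale. One then extends $q'$ in two ways, one with $D_0\preceq q''_0(D_0)$ and one with $D_0\succ q''_1(D_0)$. Since bp-embeddings preserve $\prec$, no amalgam over $(p,q)$ can accommodate both. Finding such a $D_0$ is the real work and is done by a descent through nested balls $A_i\supsetneq B_i$ together with an induction on the length of that descent; this is precisely where finiteness of $N$ and the geometry of $\PP(B)$ enter. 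If you want to repair your outline, replace the target condition ``$v(p_1,q_1)(a)\in\{a,b\}$'' by an order condition at a committed ball, and focus the argument on producing such a ball outside the reach of $q'$.
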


\begin{proof}
	For $N=\NN$, just recall that partial bp-automorphisms of families $\PP(B)$ are essentially just partial automorphisms of finite linear orderings, and the class of pairs of such automorphisms is known not to have WAP (see \cite{Tr}.)  
	
	Fix $N \in \NN$, and let $p,q$ be a pair of partial bp-automorphisms of $\Ury^\prec_N$ such that $p(C_0) \cap C_0=\emptyset$ for some fixed $C_0 \in \dom(p)$. Let $p'$, $q'$ be some extensions of $p$, $q$, respectively. Set $i=0$, let $A_i$ be any ball that strictly contains all balls in $\zdef(p') \cup \zdef(q')$, and let $\AAA_i=\{B \in \zdef(p') \cup \zdef(q'): B \subseteq A_i\}$. Let $B$ be the unique ball such that the radius of $B$ is equal to $\max \{\dist(A,B) : A,B \in \AAA_i \}$, $C_0 \subseteq B$, and every ball from $\AAA_i$ is contained in some $B' \in \PP(B)$ (note that there are at least two distinct such balls $B'$.) We put $B_i=B$, and consider the following two cases:
	
	Case 1:  $r^k(C_0) \subseteq B_i$ for every extension $r$ of $p'$ or $q'$, and $k \in \ZZ$,
	
	Case 2: $r^k(C_0) \not \subseteq B_i$ for some extension $r$ of $p'$ or $q'$, and $k \in \ZZ$.
	
	As long as Case 1 holds, we continue constructing $A_i, B_i$ by putting $A_{i+1}=B_i$, and defining $\AAA_{i+1}$, $B_{i+1}$ as above. Because $\AAA_0$ is finite, $|\AAA_i|>|\AAA_{i+1}|$, and $p(C_0) \cap C_0=\emptyset$, there must be $i_0$ such that Case 2 holds for $B_{i_0}$.
	We will show by induction on the length $n$ of sequences constructed as above that if there exists a word $v \in F_2$ and extensions $p''$, $q''$ of $p'$, $q'$ such that $v(p'',q'')(C_0) \not \subseteq B_n$, then there exist extensions $p''_0$, $p''_1$ of $p'$, and extensions $q''_0$, $q''_1$ of $q'$ such that $(p''_0,q''_0)$, $(p''_1,q''_1)$ cannot be amalgamated over $(p,q)$.
	
	For $n=0$, suppose, without loss of generality, that $r$ is an extension of $p'$. Observe that then, actually, there exists $k$, and balls $D,D'$ such that $D' \in \PP(D)$, $D_0=r^{k}(C_0) \subseteq D$, and $\AAA_0 \subseteq D'$. Clearly, we can find two extensions $q''_0$ and $q''_1$ of $q'$ so that $q''_0(D_0), q''_1(D_0) \subseteq D$, $D_0 \preceq q''_0(D_0)$ and $D_0 \succ q''_1(D_0)$. Then $(r,q''_0)$, $(r,q''_1)$ cannot be amalgamated over $(p,q)$. 
	
	Suppose now that the claim is true for all sequences of length $n$, and consider a sequence of length $n+1$. As before, we can assume that $r$ is an extension of $p'$, and fix $k \in \ZZ$, $D,D' \subseteq A_{n+1}$ such that $D' \in \PP(D)$, $D_0=r^{k}(C_0) \subseteq D$, and $\AAA_{n+1} \subseteq D'$. We have two cases to consider:
	
	Case I:  $D_0 \prec B,q'(B)$ or $D_0 \succ B,q'(B)$ for every $B \in \dom(q')$. Then, as before, we can find two extensions $q''_0$ and $q''_1$ of $q'$ so that $(r,q''_0)$, $(r,q''_1)$ are as required.
	
	Case II: $B \prec D_0 \prec q'(B)$ or $q'(B) \prec D_0 \prec B$ for some $B \in \dom(q')$. We consider only the first possibility, the other one is completely symmetric. Fix such $B$, an extension $q''$ of $q'$ such that $D_0 \in \dom(q'')$, find the largest $j \leq n+1$ such that $B \subseteq A_j$, and observe that actually $j \leq n$, and $q'(B) \subseteq A_{j+1}$. Since $D_0 \prec q'(B)$ we have that $(q'')^{-1}(D_0) \prec B$. But if $(q'')^{-1}(D_0) \subseteq A_{n+1}$, then $B \prec D_0$ would imply that $B \prec (q'')^{-1}(D_0)$, a contradiction. Thus, $(q'')^{-1}(r^k(C_0)) \not \subseteq A_{n+1}=B_n$, and we can apply the inductive assumption.
\end{proof}

\begin{corollary}
\label{co:Ultra}
	For every $N \in \NN \cup \{\NN\}$, the group $\BP(\Ury^\prec_N)$ has a comeager conjugacy class but it has no comeager $2$-diagonal conjugacy class.
\end{corollary}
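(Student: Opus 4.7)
The plan is to observe that Corollary \ref{co:Ultra} follows immediately by combining Theorems \ref{th:1-WAP}, \ref{th:2-noWAP}, and \ref{th:ComeagerN}, once the setup is properly identified. The key preliminary step is to check that $G = \BP(\Ury^\prec_N)$ is a Polish group of permutations of the countable set $\Ury^\prec_N$: being ball-preserving (equivalently, by Proposition \ref{pr:PreserveP}, preserving the ternary relation $d(x,y)<d(y,z)$) and order-preserving are both closed conditions in the product topology on $\Sym(\Ury^\prec_N)$, so $G$ is a closed subgroup and hence Polish. Then the class $\KK_{G,n}$ of all $n$-tuples of $G$-extendable bijections between finitely generated (equivalently, finite) substructures of $\Ury^\prec_N$ is exactly the class of $n$-tuples of partial bp-automorphisms, because every partial bp-automorphism of $\Ury^\prec_N$ extends to an element of $\BP(\Ury^\prec_N)$ (this was noted right after the definition of $\KK^\prec_N$, via the standard \fra argument for the class of finite convexly ordered ultrametric spaces with bp-embeddings as morphisms).

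For the first half of the corollary, I would apply Theorem \ref{th:1-WAP}: CAP for $\KK_G$ immediately implies WAP, and JEP is already guaranteed (even in its strong form) by Proposition \ref{pr:JEP}. So $\KK_G = \KK_{G,1}$ has JEP and WAP, and condition (1)$\Leftrightarrow$(4) of Theorem \ref{th:ComeagerN} with $n=1$ delivers a comeager conjugacy class in $\BP(\Ury^\prec_N)$.

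For the second half, Theorem \ref{th:2-noWAP} says exactly that $\KK_{G,2}$ fails WAP. Applying the contrapositive of condition (1)$\Leftrightarrow$(4) of Theorem \ref{th:ComeagerN} with $n=2$, we conclude that $\BP(\Ury^\prec_N)$ has no comeager $2$-diagonal conjugacy class.

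There is no substantial obstacle: both theorems have been set up precisely to make this corollary a bookkeeping exercise. The only point that requires a sentence of care is the identification of $\KK_{G,n}$ with tuples of partial bp-automorphisms and the verification that $G$ is Polish, but both are routine. The genuine mathematical content has already been absorbed into Theorems \ref{th:1-WAP} and \ref{th:2-noWAP}.
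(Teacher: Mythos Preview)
Your proposal is correct and matches the paper's approach: the corollary is stated without proof precisely because it is an immediate consequence of Theorems \ref{th:1-WAP} and \ref{th:2-noWAP} combined with the general characterization in Theorem \ref{th:ComeagerN}. The extra care you take in verifying that $\BP(\Ury^\prec_N)$ is closed in $\Sym(\Ury^\prec_N)$ and in identifying $\KK_{G,n}$ with tuples of partial bp-automorphisms (via the extendability remark following the definition of $\KK^\prec_N$) simply makes explicit what the paper leaves to the reader.
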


In particular, we can recover Theorems 3.12 and 4.4 from \cite{KwMa}.
Recall that a boron tree structure $B$ is formed from leaves of a connected, acyclic graph $G$ all of whose vertices have order $1$ or $3$, together with a quaternary relation $R$ defined by the following condition: $R(a,b,c,d)$ iff the unique paths connecting $a$ with $b$, and $c$ with $d$, are disjoint. Given a boron tree structure $G$, an ordered boron tree structure $C$ is defined as follows. First, we choose two vertices $a,b \in G$ that are connected by an edge. Next, we turn $G$ into a binary tree $T$ with root $r$, by adding a new vertex $r$ to $G$ and new edges $\{a,r\}$, $\{r,b\}$. Finally we introduce two new relations on $B$: a linear ordering $\prec$ defined by some lexicographical ordering of $T$, and a ternary relation $S$ defined by:
\[ S(a,b,c) \mbox{ iff } a\prec b \prec c \mbox{ and } \hgt_T(a \wedge b)>\hgt_T(b \wedge c), \]
where $a,b,c \in B$, $a \wedge b$ is the meet of $a$ and $b$ in $T$, and $\hgt_T$ is the height function on $T$.

Actually, $R$ can be defined only in terms of $\prec$ and $S$, so by an ordered boron tree structure we will mean triples $(C,S_C,\prec_C)$ as above. Indeed, it is easy to see that if $R(a,b,c,d)$ holds then we can rearrange $a,b,c,d$ so that either $a \prec b\prec c\prec d$ or $a\prec c \prec d \prec b$. Then $R(a,b,c,d)$ holds iff (1) $S(a,b,c) \, \& \, \neg S(a,c,d)$ or (2) $S(a,c,b)$ or (3) $S(a,c,b) \, \& \, \neg S(a,c,d)$ or (4) $\neg S(a,b,c) \, \& \, S(c,d,b)$.

\begin{proposition}
	The class of ordered boron tree structures with embeddings as morphisms, and the class of finite ordered $2$-ultrametric spaces with bp-embeddings as morpshims are equivalent. In particular, the automorphism group of the universal ordered boron tree has a comeager conjugacy class but it has no comeager $2$-diagonal conjugacy class.
\end{proposition}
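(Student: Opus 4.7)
The plan is to construct, for every finite ordered boron tree structure $(C,S,\prec)$, a naturally associated finite ordered $2$-ultrametric space on the same underlying set, and vice versa, in such a way that boron-tree embeddings correspond to order-preserving bp-embeddings. Once this correspondence is in place, the two \fra classes have the same isomorphism types and the same embedding structure, so their \fra limits are identified: the universal ordered boron tree $B$ carries a canonical $2$-ultrametric structure that makes it isomorphic to $\Ury^\prec_2$, and $\Aut(B) = \BP(\Ury^\prec_2)$ as Polish groups. The claim then reduces to Corollary \ref{co:Ultra} with $N=2$.

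Going from a boron tree $(C,S,\prec)$ with underlying binary tree $T$ to an ultrametric, put $d(a,b) = f(\hgt_T(a \wedge b))$ for any strictly decreasing function $f:\NN \to \QQ_{>0}$. The strong triangle inequality follows from the standard fact that among any three leaves of $T$ two meet-heights coincide and the third is at least as large; the $2$-polygon condition follows because $T$ is binary (three leaves would force three pairwise-equal meet-heights); and convexity of $\prec$ follows from the lexicographic nature of the ordering on $T$. An embedding of boron trees preserves $S$ and $\prec$, hence preserves the relative order of meet-heights, which is exactly condition (3) of Proposition \ref{pr:PreserveP}, so it is an order-preserving bp-embedding. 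Conversely, given a finite ordered $2$-ultrametric space $(X,d,\prec)$, the poset of balls under reverse inclusion is a binary tree whose leaves are the singletons of $X$; the convex ordering is lexicographic on this tree; and defining $S$ by the formula from the excerpt recovers an ordered boron tree structure. A bp-embedding preserves balls and their inclusion relation, hence preserves the tree and its lexicographic ordering, hence preserves $S$ (and $\prec$).

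The two assignments are mutually inverse up to isomorphism: the choice of $f$ and of integer heights on internal nodes of $T$ is auxiliary data affecting neither the ball structure on the ultrametric side (different $f$'s yield bp-isomorphic spaces) nor the triple $(C,S,\prec)$ on the boron-tree side. The main bookkeeping step will be to check that the induced map on Hom-sets is a bijection respecting identity and composition, so that \fra limits (which exist in both categories, as noted in the excerpt) are genuinely identified together with their topological automorphism groups. The somewhat delicate point is that the tree $T$ is not part of the boron-tree signature and must be recovered from $(S,\prec)$, but this recovery is uniquely determined, which makes the correspondence canonical. Once this is in place, Corollary \ref{co:Ultra} applied with $N=2$ yields simultaneously that $\Aut(B)$ has a comeager conjugacy class and that it has no comeager $2$-diagonal conjugacy class, completing the proof.
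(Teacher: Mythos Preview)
Your proposal is correct and follows essentially the same approach as the paper: both define the ultrametric via meet-heights in the underlying binary tree, invoke condition (3) of Proposition \ref{pr:PreserveP} to identify boron-tree embeddings with order-preserving bp-embeddings, pass back via the tree of balls, and then apply Corollary \ref{co:Ultra} with $N=2$. Your write-up is somewhat more explicit about the categorical bookkeeping (the auxiliary choice of $f$, the bijection on Hom-sets), but the key verification---that $S(a,b,c)$ holds iff $a\prec b\prec c$ and $d(a,b)<d(b,c)$---is the same in both.
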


\begin{proof}
	Let $(C,S_C,\prec_C)$ be an ordered boron tree structure built out of a tree $T$. Then $T$ naturally gives rise to an ultrametric $d_C$ on $C$. Note that $(C,d_C,\prec_C)$ is an ordered ultrametric space.  It is easy to verify that, for any $a,b,c \in C$,
	\[ S_C(a,b,c) \mbox{ iff } a\prec_C b \prec_C c \mbox{ and } d_C(a,b)<d_C(b,c), \]
	so, by Proposition \ref{pr:PreserveP}, for any mapping $p:C \rightarrow D$, $p$ is an embedding of an ordered boron structure $(C,S_C,\prec_C)$ into an ordered boron structure $(D,S_D,\prec_D)$ iff $p$ is a bp-embedding of $(C,d_C, \prec_C)$ into $(D,d_D, \prec_D)$. 
	
	Analogously, any ordered $2$-ultrametric space $(C,d_C)$, gives rise to an ordered boron tree structure $(C,S^D)$ built out of the tree determined by balls in $C$.
\end{proof}

One can also consider the following generalization of $N$-ultrametric spaces. Fix $P \subseteq \{2,3, \ldots, \NN \}$. A $P$-ultrametric space is an ultrametric space $X$ together with a structure $(\BB, \{K_p\}_{p \in P})$, where $\BB$ is the family of all balls in $X$, and each $K_p$ is a unary predicate. Moreover, we require that for for every $B \in \BB$ there is a unique $p \in P$ such that $K_p(B')$ for every $B' \in \PP(B)$, and $|\PP(B)| \leq p$. We will say that $X$ is \emph{thick} if $|\PP(B)|=p$ for every finite $p \in P$, and every ball $B$ such that $K_p(B)$ holds.

In \cite{Kw}, the author studies the so called generalized Wa\.{z}ewski dendrites. Every such dendrite can be identified with the \fra limit of a class of structures that are similar to boron tree structures. For a fixed $P \subseteq \{2,3, \ldots, \NN \}$, let $\mathcal{T}_P$ be the class of finite structures $(T,R,\{K_p\}_{p \in P})$, where $T$ is a connected, acyclic graph, $R$ is the quaternary relation defined exactly as for boron tree structures, and $K_p$, $p \in P$, are unary relations such that for every $t \in T$ there is a unique $p \in P$ such that $K_p(t)$ holds, and the degree of $t$ is at most $p$. We will say that $(T,R,\{K_p\}_{p \in P})$ is \emph{thick} if for every finite $p \in P$ and $t \in T$ such that $K_p(t)$ holds, the degree of $t$ is exactly $p$.

A generalized ordered Wa\.{z}ewski dendrite can be defined as the \fra limit of the class $\mathcal{T}^\prec_P$ of expansions of elements of $\mathcal{T}_P$ in a language with a binary relation $\prec$, a family of binary relations $G_i$, $i< \max{P}$, and a ternary relation $C$. To be more specific, for a given $(T,R_T) \in \mathcal{T}_P$, we fix a thick extension $(T',R_{T'})$ of $(T,R_T)$, a root $r$ in $T'$, and a lexicographical ordering $\prec_{T'}$ of $T'$ regarded as a rooted tree. Then we define $\prec_T= \prec_{T'} \upharpoonright T$, $C$ by the condition $C(a,b,c)$ iff $R(a,b,c,r)$, and $G_i(a,b)$ by the position of the unique immediate successor of $a$ that lies between $a$ and $b$ in $T'$: $G_i(a,b)$ if $K_p(a)$ for a finite $p$, and there is an immediate successor $a' \in T'$ of $a$ such that $a \leq_{T'} a' \leq_{T'} b$, where $\leq_{T'}$ is the tree partial ordering on $T'$, and $a'$ is the $i$-th element with regard to $\prec_{T'}$ in the family of all immediate successors of $a$.

Note first that $C$ is interdefinable with the ternary relation $S$ defined for boron tree structures:
\[ S(a,b,c) \mbox{ iff } a\prec b \prec c \mbox{ and } C(a,b,c), \]
\[ C(a,b,c) \mbox{ iff } (a\prec b \prec c \mbox{ and } S(a,b,c)) \mbox{ or } (c \prec a \prec b \mbox{ and } \neg S(c,a,b)).  \]

Obviously, in the case that a structure is thick, all its relations $G_i$ are determined by the ordering $\prec$, and so they can be neglected. This means that, we can identify thick trees from $\mathcal{T}^\prec_P$ with finite and thick ordered $P$-ultrametric spaces with bp-embeddings as morphisms. Note that the relations $K_p$ can be transferred as well: $K_p(t)$ iff $K_p(B')$ for the balls $B'$ in the unique $\PP(B)$ corresponding to the family of all immediate successors of $t$. Since both of these subclasses are cofinal in their corresponding classes, and proofs of Theorems \ref{th:1-WAP} and \ref{th:2-noWAP} transfer verbatim to $P$-ultrametric spaces, we get that

\begin{proposition}
	For every $P \subseteq \{2,3, \ldots, \NN \}$, the class of partial automorphisms of elements of $\mathcal{T}^\prec_P$ has CAP, and the class of pairs of partial automorphisms of elements of $\mathcal{T}^\prec_P$ does not have WAP. In particular, the automorphism group of every generalized ordered Wa\.{z}ewski dendrite has a comeager conjugacy class but it has no comeager $2$-diagonal conjugacy class.
\end{proposition}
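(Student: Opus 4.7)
The plan is to exploit the identification, already sketched in the paragraphs preceding the proposition, between the thick elements of $\mathcal{T}^\prec_P$ and the finite thick ordered $P$-ultrametric spaces with bp-embeddings as morphisms, and then to invoke the obvious analogues of Theorems \ref{th:1-WAP} and \ref{th:2-noWAP} in the $P$-ultrametric setting. Together with Theorems \ref{th:Comeager} and \ref{th:ComeagerN}, this will yield the conclusions about the automorphism group of a generalized ordered Ważewski dendrite.

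First, I would verify that every element of $\mathcal{T}^\prec_P$ embeds into a thick one: at each vertex $t$ with $K_p(t)$, $p$ finite, simply adjoin the missing immediate successors until the degree equals $p$. Hence the thick subclass is cofinal in $\mathcal{T}^\prec_P$, and properties such as CAP or the failure of WAP for pairs are equivalent when tested on the whole class and on its thick subclass. Under the identification, this reduces the problem to a purely metric one concerning partial bp-automorphisms of thick ordered $P$-ultrametric spaces.

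Second, I would walk through the proofs of Theorems \ref{th:1-WAP} and \ref{th:2-noWAP} and check that they manipulate only balls and sibling families $\PP(B)$, so that the labels $K_p$, attached to balls and preserved by bp-morphisms, impose no additional obstacle provided we never enlarge $\PP(B)$ beyond the prescribed multiplicity. When $K_\NN(B)$ holds we are in the unconstrained regime of Theorem \ref{th:1-WAP}. When $K_p(B)$ holds with $p$ finite, thickness forces $|\PP(B)|=p$, and amalgamation inside $\PP(B)$ reduces to amalgamation of partial automorphisms of a finite ordered $p$-polygon, i.e., of a finite linear order, which trivially satisfies AP. For the failure of WAP for pairs, the counterexample of Theorem \ref{th:2-noWAP} is already localized inside a single $\PP(B)$ with $|\PP(B)| \geq 2$, so one simply chooses a ball $B$ with $K_p(B)$ for any $p \in P$ with $p \geq 2$ and reads off the same obstruction inside $\PP(B)$, using that partial automorphisms of the induced finite linear ordering form a class whose pairs fail WAP.

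Finally, with CAP of the class of partial automorphisms of $\mathcal{T}^\prec_P$ (which in particular gives JEP and WAP), and with failure of WAP for the class of pairs, Theorems \ref{th:Comeager} and \ref{th:ComeagerN} applied to $G=\Aut(M)$, with $M$ a generalized ordered Ważewski dendrite, yield the existence of a comeager conjugacy class and the nonexistence of a comeager $2$-diagonal conjugacy class. The main bookkeeping obstacle I anticipate is making the correspondence between partial automorphisms of $\mathcal{T}^\prec_P$-structures and partial bp-automorphisms of $P$-ultrametric spaces rigorous, together with the accompanying transfer of the predicates $K_p$ and of the order $\prec$; this in turn rests on the interdefinability of $C$ and $S$ displayed just before the proposition and on the observation that, in the thick case, the relations $G_i$ become redundant, so that morphisms in either category correspond faithfully to morphisms in the other.
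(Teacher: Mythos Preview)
Your approach is essentially the paper's: reduce to the thick cofinal subclass, identify it with finite thick ordered $P$-ultrametric spaces via the interdefinability of $C$ and $S$ and the redundancy of the $G_i$ in the thick case, and then assert that the proofs of Theorems~\ref{th:1-WAP} and~\ref{th:2-noWAP} carry over verbatim to $P$-ultrametric spaces.

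One point to correct in your sketch: your claim that the counterexample of Theorem~\ref{th:2-noWAP} is ``already localized inside a single $\PP(B)$'' is true only in the case $N=\NN$. For finite $N$ the proof there is not confined to one sibling family; it builds a descending sequence of balls $A_i\supseteq B_i$ at different scales and runs an induction on the length of that sequence to force incompatible extensions. So if every $p\in P$ is finite, you cannot simply read off the obstruction inside a single $\PP(B)$ --- you must transfer the full multi-scale argument of Theorem~\ref{th:2-noWAP}, which does go through unchanged in the $P$-ultrametric setting since it only uses balls, inclusion, and the ordering, none of which is constrained by the labels $K_p$.
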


Finally, we point out that Theorems \ref{th:1-WAP} and \ref{th:2-noWAP} can be also used in the context of Polish ultrametric spaces and their bp-automorphism groups, equipped with the pointwise convergence topology.

\begin{theorem}
	Let $X$ be an (ordered) ultrahomogeneous Polish ultrametric space. For every $n \in \NN$, the group $BP(X)$ has a comeager $n$-diagonal conjugacy class if and only if the (countable) family of all $n$-tuples of partial bp-automorphisms of $X$ has JEP and WAP. In particular, for every ordered ultrahomogeneous Polish ultrametric space $X$, the group $BP(X)$ has a comeager conjugacy class but it has no comeager $2$-diagonal conjugacy class. 
\end{theorem}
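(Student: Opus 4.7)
The plan is to reduce to the countable case already handled by Theorems \ref{th:ComeagerN}, \ref{th:1-WAP}, and \ref{th:2-noWAP}. Given an ordered ultrahomogeneous Polish ultrametric space $X$, I would first extract a countable dense subset $M \subseteq X$ that meets every ball of $X$, built inductively by picking one point from each ball in a cofinal countable basis of balls. Ultrahomogeneity of $X$ then propagates to $M$: every finite partial bp-bijection of $M$ extends to an element of $\BP(X)$, since by density balls in $X$ are determined by their intersections with $M$. The restriction map $\rho:\BP(X)\to\Sym(M)$ is injective, continuous, and has closed image $G=\rho[\BP(X)]$; the two pointwise-convergence topologies agree under $\rho$, so $\BP(X)\cong G$ as Polish groups. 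Under this identification, the family of $n$-tuples of partial bp-automorphisms of $X$ coincides, up to the natural equivalence of embeddings, with $\KK_{G,n}$, since any such $n$-tuple with finite domain and range can be transported into $M$ by elements of $\BP(X)$ coming from ultrahomogeneity. Applying Theorem \ref{th:ComeagerN} to $(M,G,n)$ then yields the first sentence of the theorem.

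For the `in particular' claim, observe that the amalgamation constructions in the proofs of Theorems \ref{th:1-WAP} and \ref{th:2-noWAP} manipulate only the ball lattice, the adjacency relation $\PP(\cdot)$, and the induced convex ordering; the hypothesis `rational distances with $r$-polygons of size at most $N$' is used solely to ensure that the partial bp-bijections being amalgamated remain in the ambient \fra class, which here follows automatically from ultrahomogeneity of $X$. Hence the arguments transfer verbatim to $M$: the $1$-tuple class of partial bp-automorphisms has CAP (and in particular JEP and WAP), while the $2$-tuple class fails WAP. Combined with the equivalence established above, this produces a comeager conjugacy class and rules out a comeager $2$-diagonal conjugacy class in $\BP(X)$.

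The main obstacle is the reduction in the first paragraph, specifically verifying that $\rho$ is a topological isomorphism and that the JEP/WAP properties of the Polish family of $n$-tuples of partial bp-automorphisms coincide with the corresponding properties of $\KK_{G,n}$. A potentially delicate point is the ordered setting: the convex ordering on $M$ must be the restriction of that on $X$, and one must check that convexity on $M$ is witnessed by balls of $X$ arising from $M$, which relies essentially on density together with the fact that convex orderings are induced by orderings of balls. Once this bookkeeping is in place, the remainder of the argument is a routine assembly of the cited theorems.
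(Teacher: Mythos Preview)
Your reduction has a genuine gap at the very first step: there is no reason for the countable dense set $M$ you construct to be invariant under the full group $\BP(X)$, so the ``restriction map'' $\rho:\BP(X)\to\Sym(M)$ is not even well-defined. In an ultrahomogeneous Polish ultrametric space the orbit of a single point under $\BP(X)$ is typically all of $X$ (or at least uncountable), so no countable subset can be $\BP(X)$-invariant. Consequently the identification $\BP(X)\cong G\leq\Sym(M)$ collapses, and with it the appeal to Theorem~\ref{th:ComeagerN}.

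The paper sidesteps this by \emph{not} trying to realize $\BP(X)$ as a permutation group of a fixed countable set. Instead it works directly in $\BP(X)$: since the class $\KK$ of $n$-tuples of partial bp-automorphisms is countable up to bp-isomorphism, one can rerun the Rasiowa--Sikorski construction from Theorem~\ref{th:WeakLim} inside $\BP(X)$ to exhibit the weakly $\KK$-injective tuples as a dense $G_\delta$. A countable dense subset enters only \emph{after} two specific weakly $\KK$-injective $\Phi,\Psi$ have been fixed: one takes any countable dense $X_0$ and closes it under $\Phi^{\pm1},\Psi^{\pm1}$ to obtain a $\{\Phi,\Psi\}$-invariant countable dense set, then runs the back-and-forth of Theorem~\ref{th:InjJEPWAP} there and extends the conjugating map by density. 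The point is that the invariant countable set is allowed to depend on the pair being conjugated; you were asking for one that works uniformly for all of $\BP(X)$, which does not exist. Your treatment of the ``in particular'' clause, by contrast, is fine and matches the paper's.
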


\begin{proof}
	
	Fix $n \in \NN$. Because the family of finite subspaces of $X$ is countable up to isometric isomorphism, the family $\KK$ of $n$-tuples of partial bp-automorphisms of $X$ is also countable up to bp-isomorphism. Suppose that $\KK$ has JEP and WAP. Applying the construction from the proof of Theorem \ref{th:WeakLim}, we can show that the set of weakly $\KK$-injective bp-automorphisms of $X$ is a dense $G_\delta$ subset of $\BP(X)$. It is easy to prove that any two weakly $\KK$-injective $\Phi, \Psi \in \BP(X)$ are conjugate. Indeed, fix weakly $\KK$-injective $\Phi, \Psi \in \BP(X)$, and a countable, dense $X_0 \subseteq X$ that is invariant under the action of both $\Phi$ and $\Psi$. Let $G_0=\BP(X_0)$. Then, $\Phi_0=\Phi \upharpoonright X_0$ and $\Psi_0=\Psi \upharpoonright X_0$ are also $\KK$-weakly injective, and so, by Theorem \ref{th:InjJEPWAP}, they are conjugate by an element $\Xi_0$ of $G_0$. Now, $\Xi_0$ uniquely extends to $\Xi \in \BP(X)$, and $\Xi$ witnesses that $\Phi$ and $\Psi$ are conjugate in $\BP(X)$.
	
	Similarly, it is a straightforward observation that if there exists a comeager $n$-diagonal conjugacy class in $\BP(X)$, then $\KK$ has JEP and WAP. The last statement of the theorem follows then from Theorems \ref{th:1-WAP} and \ref{th:2-noWAP}.
\end{proof}


\begin{thebibliography}{HD}
\bibitem{Ah} O. Ahlman, \emph{Homogenizable structures and model completeness}, Arch. Math. Logic 55 (2016) 977--999.
\bibitem{AtTo} A. Atserias, S. Torunczyk, \emph{Non-homogenizable classes of finite structures}, Annual Conference of the European Association for Computer Science Logic, 2016, 16:1--16:16. 
%\bibitem{BeKe} H. Becker, A. Kechris, \emph{The Descriptive Set Theory of Polish Group Actions}. Cambridge University Press, Cambridge, 1996.
\bibitem{Co} J. Covington, \emph{Homogenizable relational structures}, Illinos J. Math. 34 (1990), 731--743.
\bibitem{Ho} W. Hodges, {\em Model theory}, Encyclopedia of Mathematics and its Applications, 42, Cambridge University Press, 1993.
\bibitem{Iv}  A. Ivanov, {\em Generic expansions of $\omega$-categorical structures and semantics of generalized quantifiers}, J. Symbolic Logic, 64 (1999), 775--789.
\bibitem{Ja} J. Jasinski, \emph{Ramsey degrees of boron tree structures}, Combinatorica 33 (2013), 23–-44.
\bibitem{KeRo} A. Kechris, C. Rosendal, \emph{Turbulence, amalgamation, and generic automorphisms of homogeneous structures}, Proc. Lond. Math. Soc. 94 (2007), 302--350.
\bibitem{Kw} A. Kwiatkowska, \emph{Universal minimal flows of generalized Wa\.{z}ewski dendrites}, J. Symbolic Logic 83 (2018), 1618--1632.
\bibitem{KwMa} A. Kwiatkowska, M. Malicki, \emph{Ordered structures and large conjugacy classes}, J. Algebra, 557 (2020), 67--96.
\bibitem{Ma} M. Malicki, Rooted trees, strong cofinality and ample generics, Math. Proc. Cambridge Phil. Society, 154 (2013), 213--223.
\bibitem{KrKu} A. Krawczyk, W. Kubi\'{s}, \emph{Games on finitely generated structures}, arXiv:1701.05756v5.
\bibitem{KrKu2} A. Krawczyk, A. Kruckman, W. Kubi\'{s}, A. Panagiotopoulos, \emph{Examples of weak amalgamation classes}, arXiv:1907.09577v1.
\bibitem{Tr} J. Truss, \emph{On Notions of Genericity and Mutual Genericity}, J. Symbolic Logic 72 (2007), 755--766.


\end{thebibliography}
\end{document}